\documentclass{mod}

\usepackage{titlesec}
\usepackage{titletoc}

\usepackage{caption}
\usepackage{subcaption}
\usepackage[all]{xy}
\usepackage{mathtools}

\usepackage{graphicx}
\usepackage{amsmath,amssymb,amsfonts}
\usepackage{mathrsfs}
\usepackage{amsthm}
\usepackage{rotating}
\usepackage{appendix}

\usepackage{textcomp}
\usepackage{hyperref}

\usepackage{tocloft}
\usepackage{tocbasic}

\newtheorem{thm}{Theorem}[section]

\theoremstyle{plain}
\newtheorem{lem}[thm]{Lemma}
\newtheorem{prop}[thm]{Proposition}

\newtheorem{defn-thm}[thm]{Definition-Theorem}

\newtheorem{defn-lem}[thm]{Definition-Lemma}
\newtheorem{defn-prop}[thm]{Definition-Proposition}

\newtheorem{defn}[thm]{Definition}

\newtheorem{assumption}[thm]{Assumption}

\theoremstyle{definition}

\newtheoremstyle{rmk}
{5pt}
{5pt}
{}
{}
{\itshape}
{}
{.5em}
{}

\theoremstyle{rmk}

\newtheorem{rmk}[thm]{Remark}

\titlecontents{section}
[0.72em]
{\scriptsize\bfseries\vspace{3pt}}%
{\thecontentslabel.\enspace}
{}
{\titlerule*[0.38pc]{.}\contentspage}%

\titlecontents{subsection}
[0em]
{\scriptsize \vspace{0pt}}%
{\qquad \quad \thecontentslabel.\enspace}
{}
{\titlerule*[0.5pc]{.}\contentspage}%

\titlecontents{subsubsection}
[0em]
{\footnotesize \vspace{1pt}}%
{\qquad \qquad \thecontentslabel.\enspace}
{}
{\titlerule*[0.5pc]{.}\contentspage}%

\titleformat{\subsection}[runin]{
	\bfseries\itshape\normalsize}{(\thesubsection) }{0em}{}[\mbox{ . } ]
\titlespacing{\subsection}
{0pt}
{2.25ex plus 1ex minus .2ex}
{0pt}

\newcommand{\val}{\mathrm{val}}

\newcommand{\id}{\mathrm{id}}

\newcommand{\mathds}[1]{\text{\usefont{U}{dsrom}{m}{n}#1}}
\newcommand{\one}{\mathds {1}}

\newcommand{\UD}{\mathscr{UD}}
\newcommand{\simud}{\stackrel{\mathrm{ud}}{\sim}}

\newcommand{\ev}{\mathrm{ev}}

\newcommand{\e}{\mathbf e}

\newcommand{\mi}{\mathfrak i}
\newcommand{\mI}{\mathfrak I}

\newcommand{\tri}{{\mathrm{tri}}}
\newcommand{\m}{{\mathfrak m}}

\newcommand{\M}{\mathfrak M}

\newcommand{\mc}{\mathfrak c}
\newcommand{\mC}{\mathfrak C}

\newcommand{\oi}{{[0,1]}}

\newcommand{\ab}{{[a,b]}}
\newcommand{\f}{\mathfrak f}
\newcommand{\F}{\mathfrak F}

\newcommand{\mH}{ H}
\newcommand{\h}{\mathfrak h}

\newcommand{\g}{\mathfrak g}

\newcommand{\HL}{H^*(L)}
\newcommand{\OL}{\Omega^*(L)}

\newcommand{\trop}{\mathfrak{trop}}

\DeclareMathOperator{\incl}{Incl}
\DeclareMathOperator{\eval}{Eval}

\DeclareMathOperator{\Spec}{Spec}

\DeclareMathOperator{\Hom}{Hom}

\DeclareMathOperator{\CC}{\mathbf{CC}}

\numberwithin{equation}{section}

\renewcommand{\theequation}{\thesection.\alph{equation}}

\begin{document}

	\title[
	Existence of pseudo-holomorphic disks via non-archimedean disk potentials
	]
	{{\fontsize{13}{15}\selectfont 
	Existence of pseudo-holomorphic disks via non-archimedean disk potentials
		}}

	\author{Hang Yuan}
	\date{}
	
\begin{abstract} {\sc Abstract:}  
We show that if a graded monotone Lagrangian $L_0$ has a non‑vanishing disk potential, then for every smooth isotopy $\{L_s\}_{s\in[0,1]}$ of Lagrangians starting from it and for every tame almost complex structure $J$, each $L_s$ bounds a $J$-holomorphic disk of Maslov index two.
The main input is a non-archimedean analytic potential function, defined as an invariant up to analytic isomorphisms, generalizing the classical disk potential of a monotone Lagrangian. The techniques are inspired by recent developments in the Strominger-Yau-Zaslow mirror construction via family Floer theory and non-archimedean geometry.
We also discuss applications such as recovering a simple case of Audin’s conjecture. 
\end{abstract}

\hypersetup{
	colorlinks=true,
	linktoc=all,
	citecolor=gray
}

\maketitle
	
\tableofcontents
%

\setlength{\parindent}{5.5mm}	\setlength{\parskip}{0em}
 
\section{Introduction}

\subsubsection*{\quad Background}
A well-known \textit{enumerative} invariant for a \textit{monotone} Lagrangian $L\subseteq X$ is the algebraic count of Maslov index 2 pseudo-holomorphic disks bounded by $L$, as pointed out in \cite{eliashberg1993unknottedness}.
There are a few applications.
For example, Auroux \cite{auroux2015infinitely} uses this invariant to prove that there are infinitely many monotone Lagrangian tori in $\mathbb R^6$, no two of which are related by Hamiltonian isotopies.
Vianna also uses it to identify an exotic Lagrangian torus in $\mathbb {CP}^2$ \cite{vianna2014exotic} and shows that $\mathbb{CP}^2$ contains inﬁnitely many non-isotopic monotone Lagrangian tori \cite{Vianna2016infinitely}.
Let's recall the definition based on \cite[\S 3.1]{auroux2015infinitely}:
Fix a homotopy class $\beta\in H_2(X,L)$ with its Maslov index $\mu(\beta)=2$.
Consider the moduli space of $J$-holomorphic discs with one boundary marked point
$
\mathcal M_1 (L, \beta, J)
$.
Since the monotonicity of $L$ means that the symplectic area of discs is positively proportional to their Maslov index, bubbling can be excluded.
Thus, a generic choice of $J$ ensures the regularity of these disks, and then the moduli space $\mathcal M_1 (L, \beta, J)$ is a smooth compact manifold of dimension $n + \mu(\beta)- 2 = n$ where $n=\dim L$.
We can show that the degree of the evaluation map at the marked point $\mathrm{ev}:\mathcal M_1(L,\beta,J)\to L$ is independent of the chosen $J$, and we denote its value by $n_\beta \in\mathbb Z$.

Following Auroux's seminal work \cite{AuTDual}, the counts $n_\beta$ of Maslov index 2 holomorphic disks can be assembled into a \textit{Landau-Ginzburg mirror superpotential}:
\begin{equation}
	\label{eq_W_monotone_intro}
	W_L=\sum_{\mu(\beta)=2} n_\beta  \ e^{-\int_\beta\omega} \ \mathrm{hol}_\nabla (\partial\beta)
\end{equation}
for the context of Strominger-Yau-Zaslow mirror symmetry \cite{SYZ}. Namely, the $W_L$ should be viewed as a global function on certain "mirror space" $X^\vee$ of equivalence classes of Lagrangian submanifolds equipped with unitary flat connections $\nabla$, modulo certain wall-crossing problems  \cite{AuTDual,seidel2008biased} caused by Maslov index 0 holomorphic disks. Here we write $\mathrm{hol}_{\nabla}$ for the holonomy of $\nabla$.
Concerning its symplectic background, we also often call $W_L$ the \textit{disk (super)potential function} associated to $L$.

A simple example of (\ref{eq_W_monotone_intro}) is the monotone toric fiber $L$ in a toric manifold, as in the work of Cho-Oh \cite{Cho_Oh} and Fukaya-Oh-Ohta-Ono \cite{FOOOToricOne}, where the resulting superpotential $W_L$ matches the physical prediction by Hori-Vafa \cite{hori2000mirror}.
More applications of such superpotentials can be found in the works of Kim \cite{kim2023disk},  Nishinou-Nohara-Ueda \cite{nishinou2010toric}, Chanda-Hirschi-Wang \cite{chanda2023infinitely}, Pascaleff-Tonkonog \cite{PT_mutation}, Wu \cite{wu2015exotic}, and many others.

Now, it is natural to raise the question of whether the disk potential function $W_L$ admits a meaningful generalization to non-monotone Lagrangians $L$, and whether such a generalization can yield interesting applications.
The work of Fukaya-Oh-Ohta-Ono \cite[\S 3]{FOOOBookOne} suggests that there exists a potential function
\begin{equation}
	\label{W_FOOO}
W_L:\mathcal M(L)\to \Lambda_0
\end{equation}
where $\mathcal M(L)$ is the moduli \textit{set} of weak bounding cochains and $\Lambda_0$ is the Novikov ring. In general, $\mathcal M(L)$ is only a set and does not carry a natural space structure; see \cite[Theorem 4.3.13]{FOOOBookOne}.

\subsubsection*{\quad Potential function defined on a space}

While the disk potential in \eqref{eq_W_monotone_intro} applies in a more restricted setting than \eqref{W_FOOO}, it has the advantage of being realized as a Laurent polynomial, and hence as a concrete function on a meaningful space. By contrast, the framework of \eqref{W_FOOO} accommodates a much broader class of Lagrangians, but the resulting potential is typically defined only as a function on a set, making it more abstract and less directly suited to applications.
It is therefore natural to ask whether there is an intermediate framework that retains the advantages of both approaches.
This is one of the main motivations for the present work. 
Let's further elaborate on it as follows.

For \eqref{eq_W_monotone_intro}, if we choose a basis $\gamma_1,\dots,\gamma_m$ of $H_1(L)$ and write
$y_i=\mathrm{hol}_\nabla(\gamma_i)\in \mathbb C^*$,
then it can be expressed as a Laurent polynomial function
\[W_L(y_1,\dots,y_m)
=
\sum_{\mu(\beta)=2} n_\beta\,y^{\partial\beta}
:\, (\mathbb C^*)^n \to \mathbb C \]
on the complex torus space,
where $y^{\partial\beta}$ denotes the Laurent monomial corresponding to the boundary class $\partial\beta$. For $\partial\beta=a_1\gamma_1+\cdots+a_m\gamma_m$, we write $y^{\partial\beta}=y_1^{a_1}\cdots y_m^{a_m}$.
Now, let's think of $y_i$ as formal variables $Y_i$, and introduce the Novikov symbol $T$. Then we may recast the above $W_L$ as
\[W_L=\sum_{\mu(\beta)=2} T^{\omega(\beta)} n_\beta Y^{\partial\beta}\]
Here the additional factor $T^{\omega(\beta)}$ is introduced based on \eqref{W_FOOO}. When $L$ is non-monotone, there may exist Maslov index 0 holomorphic disks. If $\alpha\in H_2(X,L)$ can be represented by a holomorphic disk and has Maslov index $\mu(\alpha)=0$, then for any class $\beta\in H_2(X,L)$ with $\mu(\beta)=2$, and any integer $k\ge 0$, we have
$\mu(\beta+k\alpha)=2$,
and these disk classes $\beta+k\alpha$ may all contribute to $W_L$.
Therefore, in general, the potential $W_L$ may contain \textit{infinitely many} monomials, and is typically no longer a Laurent polynomial over $\mathbb C$, but rather a Laurent formal power series over the Novikov field
$\Lambda=\mathbb C((T^{\mathbb R}))$.
In particular, it is natural to invoke transcendental discussions for analytic geometry over $\Lambda$.

Because of convergence issues for infinite series, it does not necessarily define a function $(\Lambda^*)^n\to \Lambda$
directly. However, by estimating the symplectic areas of holomorphic disks, one can prove that on an analytic subspace of the form
\[\trop^{-1}(V)\subseteq (\Lambda^*)^n,\]
where $V\subset \mathbb R^n$ is a small neighborhood containing the origin and where $\trop: (\Lambda^*)^n\to\mathbb R^n$ is the standard tropicalization map (which we will review later in \S \ref{s_trop_map}).
Now, the formal power series $W_L$ does define a well-defined analytic function (in the sense of Berkovich geometry):
\[W_L: \trop^{-1}(V) \to \Lambda.\]

This framework is also compatible with the SYZ mirror symmetry picture suggested around \eqref{eq_W_monotone_intro}. In fact, the so-called notion of affinoid torus fibration may be viewed as a non-archimedean analytic analogue of a Lagrangian torus fibration, locally modeled on the fibration $\trop^{-1}(V)\to V$. An affinoid torus fibration canonically induces an integral affine structure on the base, in analogy with the Arnold-Liouville theorem. In our non-archimedean SYZ mirror construction developed and exemplified in \cite{Yuan_I_FamilyFloer,Yuan_local_SYZ,Yuan_conifold,Yuan_A_n}, these fibrations model the mirror SYZ dual fibration.

Note that $W_L$ generally depends on auxiliary choices. For instance, the existence of Maslov index zero holomorphic disks depends on the almost complex structure $J$. Nevertheless, one can show that up to non‑archimedean analytic isomorphism, $W_L$ is indeed an \textit{invariant} of $L$.
Actually, from the SYZ mirror construction viewpoint, this means that the mirror Landau-Ginzburg mirror superpotential function $W_L$ can be expressed differently in different mirror local charts, where an auxiliary choice basically corresponds to a choice of local chart for the mirror analytic space; we refer to \cite{Yuan_I_FamilyFloer,Yuan_c_1,Yuan_unobs} for more contexts.

%
%
%
%
%

\subsubsection*{\quad Main result}

The main goal here is to derive applications from the well‑definedness of the aforementioned generalized potential function $W_L$, and we consider the following setup.
Suppose $(X, \omega)$ is a symplectic manifold of real dimension $2n$ that is closed or convex at infinity.
Let \(\{L_s:s\in[0,1]\}\) be a smooth Lagrangian isotopy of spin closed graded Lagrangian submanifolds, for instance a Hamiltonian isotopy \(L_s=\phi_s(L_0)\), or a small deformation in a Weinstein neighborhood \(L_s=\operatorname{graph}(\alpha_s)\) for a smooth family of closed one-forms \(\alpha_s\).

Here we further assume that the initial $L_0$ is a graded monotone Lagrangian submanifold.
Recall that the monotone condition means that the area homomorphism $H_2(X,L)\to\mathbb R$ is a positive scalar of the Maslov homomorphism $\mu: H_2(X,L)\to\mathbb Z$.

\begin{thm}
	\label{main_thm}
	If the disk potential \eqref{eq_W_monotone_intro} of $L_0$ is non-vanishing, then $L_s$ must bound a $J$-holomorphic disk of Maslov index 2 for every $s\in [0,1]$ and every $\omega$-tame almost complex structure $J$.
\end{thm}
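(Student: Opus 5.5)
We argue by contradiction, using the non-archimedean potential $W_{L}$ described above as an invariant up to analytic isomorphism, and transporting it along the isotopy. Suppose that for some $s_*\in[0,1]$ and some $\omega$-tame $J_*$, the Lagrangian $L_{s_*}$ bounds no $J_*$-holomorphic disk of Maslov index two. The plan has three steps. First, compute $W_{L_{s_*}}$ with respect to $J_*$ and show it vanishes identically. Second, compute $W_{L_0}$ and identify it, up to the $T$-adic rescaling, with the Laurent polynomial \eqref{eq_W_monotone_intro}, which is nonzero by hypothesis. Third, relate the two by a chain of analytic isomorphisms and derive a contradiction.

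For the first step, the grading hypothesis is what makes the argument work. Because every $L_s$ is graded, the Maslov class vanishes on $H_1$-level loops of $L_s$, so disk Maslov indices are well defined. The potential $W_{L_{s_*}}$ is assembled from the $\m_0$-term of the (weakly unobstructed) $A_\infty$ structure built with $J_*$. Its degree-zero, i.e.\ Maslov-two, component receives contributions only from stable maps whose total Maslov index is $2$.

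The subtle point is that such a stable map could a priori be a tree of disks with Maslov indices $0$ and $2$, or contain negative ones. To handle this, I would use the area filtration together with the fact that every nonconstant configuration has a component of positive area. A Maslov-two configuration without a Maslov-two disk component would then need a component with negative Maslov index. I would exclude this by perturbing $J_*$ only near $L_{s_*}$ within a Gromov-compactness neighborhood, or alternatively by working with the cohomological degree of $\m_0$ directly. Granting this, the relevant moduli spaces are empty for $J_*$, the Kuranishi perturbations may be chosen trivially, and $W_{L_{s_*}}\equiv 0$ on $\trop^{-1}(V)$.

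For the second step, monotonicity of $L_0$ rules out Maslov-zero and negative disks. Hence $W_{L_0}=\sum n_\beta T^{\omega(\beta)}Y^{\partial\beta}$ is a finite sum, and it is nonzero as an analytic function on a polyannulus.

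For the third step, I would subdivide $[0,1]$ finitely. Using Weinstein neighborhoods, each $L_{s'}$ near $L_s$ is the graph of a closed one-form, and the family Floer/pseudo-isotopy machinery gives an analytic isomorphism $\phi$ with
$$W_{L_{s'}}\circ \phi = W_{L_s}$$
on suitable domains $\trop^{-1}(V_s)$, shifted by the flux. The invariance under change of $J$ is covered by the same result. Composing along the chain gives $W_{L_{s_*}}=W_{L_0}\circ\Phi$ for some analytic isomorphism $\Phi$. An analytic isomorphism of nonempty polyannuli cannot carry a nonzero analytic function to zero, which yields the contradiction.

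I expect the main obstacle to be the compatibility of domains along the chain. Each isomorphism is defined only on a small tropical neighborhood, and the nonvanishing of $W$ must survive restriction. This is handled by noting that a nonzero analytic function on a connected affinoid domain does not vanish on any nonempty open subdomain, by the identity principle for Laurent series on polyannuli. It also requires checking that the images under $\Phi$ are nonempty, which follows since $\Phi$ covers an affine-linear map of tropical bases.
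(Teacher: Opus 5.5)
Your overall architecture matches the paper's. It has the same ingredients: the monotone end, the Fukaya trick on Weinstein neighbourhoods producing the flux rescaling $Y_i\mapsto T^{a_i}Y_i$, change-of-$J$ automorphisms of $\mathscr G$, and the principle that ``$W\neq 0$ forces a Maslov-two disk''. Your finite chain argued by contradiction is equivalent to the paper's open--closed argument on the set $\mathbb S$ of effective parameters.

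\textbf{Step 1 has a genuine gap.} Neither remedy you offer for Maslov-two configurations without a Maslov-two disk component works.

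Perturbing $J_*$ is not allowed. The theorem concerns every tame $J$, and the hypothesis that $L_{s_*}$ bounds no Maslov-two $J_*$-disk need not survive perturbation. Maslov-two disks for nearby $J'$ may degenerate, as $J'\to J_*$, into exactly the mixed configurations you want to exclude, so their existence does not contradict your hypothesis. Nor does genericity exclude negative-Maslov disks in general.

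The degree constraint $\m_{0,\beta}\in H^{2-\mu(\beta)}(L)$ does not help either: it sees only $\beta$, not how $\beta$ splits into holomorphic pieces.

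The missing input is the semipositivity $\mu\ge 0$ on holomorphic disk classes. In the paper this is what gradedness supplies (conditions (I-5) and (II-5)). Your reading of gradedness as making disk Maslov indices well defined is not the relevant consequence.

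Moreover, $W$ is read off from the minimal model on $H^*(L)$, so emptiness of Maslov-two moduli spaces is not enough by itself. The coefficient $\m_{0,\beta}$ also receives tree terms $\pi\circ\check\m_{\ell,\beta_0}(\mi_{0,\beta_1},\dots,\mi_{0,\beta_\ell})$ that involve Maslov-zero moduli spaces, and these need not be empty. The paper handles this in Lemma \ref{lem_W_m_not_zero}:
\begin{enumerate}
\item By Assumption \ref{assumption_empty_moduli}, $\check\m_{k,\gamma}=0$ unless $\gamma=0$ or $\gamma$ lies in the semigroup $S$ generated by $J$-disk classes.
\item Induction on energy through \eqref{eq_induction_tree_minimal_model} then gives $\mi_{0,\beta}=\m_{0,\beta}=0$ for $\beta\notin S$.
\item Hence a nonzero Maslov-two coefficient of $W_{\m}$ forces $\beta=\gamma_1+\dots+\gamma_r$, with each $\gamma_j$ a $J$-disk class of nonnegative even Maslov index, so some $\mu(\gamma_j)=2$.
\end{enumerate}

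\textbf{Step 3 silently assumes unobstructedness at every stage.} In the paper, both the identity $\phi_{\f}(W_{\m'})=W_{\m}$ and the independence of $\phi_{\f}$ from the ud-homotopy class of $\f$ are available only for properly unobstructed objects. That means the Maslov-zero obstruction series $Q_{\m,i}$ must vanish; otherwise the discrepancy $P_{\f}-P_{\f'}$ lies in the ideal the $Q_{\m,i}$ generate.

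The intermediate $L_s$ are not monotone and may bound Maslov-zero disks, so you cannot posit this at each stage; it must be propagated from $L_0$:
\begin{enumerate}
\item Monotonicity excludes Maslov-zero disks, so $Q=0$ at $L_0$ (Lemma \ref{lem_no_maslov_zero_implies_proper_unobs}).
\item The Fukaya trick identifies the $Q$-series together with $W$ under the same flux rescaling.
\item Proper unobstructedness is preserved under $A_\infty$ quasi-isomorphisms in $\UD$. This covers your final change from the transported almost complex structure to $J_*$.
\end{enumerate}
This is why the paper builds proper unobstructedness into its definition of an effective Lagrangian.
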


The point is that the existence of an individual holomorphic disk is not by itself a deformation invariant: moduli spaces may undergo wall-crossing; cf. \cite[Figure 1]{AuTDual}. For example, suppose that $L_0$ bounds several Maslov index two holomorphic disks in classes $\beta_i$. After deforming $L_0$ to $L_1$, the disk predicted by Theorem \ref{main_thm} need not correspond directly to any one of the original classes $\beta_i$. Under a suitable identification of relative homology groups along the isotopy, it may instead be obtained from one of them by adding a Maslov index zero class, namely it may have the class $\beta_i+\alpha$ with $\mu(\alpha)=0$.
Hence, the rough picture is that it is \textit{not} the individual pseudo-holomorphic disks that deform, \textit{but} rather the $A_\infty$ algebras associated to $L_s$, and hence the isomorphism class of the non-archimedean disk potential $W_{L_s}$, that deform.

In this sense, holomorphic disks should not be studied in isolation, but rather organized into a coherent algebraic structure. We therefore propose that the relevant invariant is the homotopy equivalence class of the associated $A_\infty$ algebra. From this perspective, the (generalized) disk superpotential should be viewed functorially: it assigns to each suitable $A_\infty$ algebra a non-archimedean analytic function, and to each equivalence between such algebras the corresponding analytic change of coordinates.

A main motivation of this paper is to demonstrate that the non-archimedean analytic perspective over the Novikov field is useful not only for clarifying the mathematical content of the SYZ conjecture and mirror symmetry, but also for offering new insight into problems in pure symplectic geometry. We hope that the proof of Theorem \(\ref{main_thm}\) provides a tentative step toward the systematic development of this direction. More broadly, we suggest the speculative principle that rigidity phenomena in symplectic geometry may be encoded by suitable non-archimedean analytic structures.

\subsubsection*{\quad Further application}

We now discuss some possible directions for further applications of Theorem \ref{main_thm}. The main input behind Theorem \ref{main_thm} is the construction of a generalized disk potential as a non-archimedean analytic function, together with its well-definedness up to analytic isomorphism. In the present paper, we only use a rather small part of this generalized disk superpotential invariant, namely its non-vanishing. We expect that finer
features of the invariant should lead to further applications.
For instance, one possible direction is to study Newton-type invariants of the generalized potential. In the monotone case, Vianna studies the Newton polytope of the disk superpotential (or the boundary Maslov-2 convex hull) to distinguish Lagrangian tori \cite[Definition 4.2 \& Remark 4.5]{Vianna2016infinitely}. Since the generalized potential constructed here is usually an infinite series rather than a Laurent polynomial, the ordinary Newton polytope should be replaced by a non-archimedean or extended Newton polytope; see e.g. \cite[Section 2.1]{EKL}. Developing this idea concretely would be an interesting direction for future work.

A specific application is obtained by combining Theorem \ref{main_thm} with the Lagrangian isotopy classification of tori.
If all Lagrangian tori in a given symplectic manifold are smoothly
isotopic to a monotone toric fiber with non‑zero potential, then
Theorem \ref{main_thm} implies that they all bound Maslov index two disks for some almost complex structure.
The following is a concrete application of this idea.

\begin{prop}
	\label{Application_4mfd_prop}
	Let $(X,\omega)$ be one of the following symplectic four-manifolds: $\mathbb C^2$, $\mathbb {CP}^2$, $S^2\times S^2$. Then, for every $\omega$-tame almost complex structure $J$, every Lagrangian torus $L\subset X$ bounds a Maslov index two $J$-holomorphic disk.
\end{prop}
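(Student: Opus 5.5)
The plan is to reduce Proposition \ref{Application_4mfd_prop} to Theorem \ref{main_thm} by appealing to known smooth (in fact Lagrangian) isotopy classifications of Lagrangian tori in these three four-manifolds. The key input is Dimitroglou Rizell--Goodman--Ivrii: every Lagrangian torus in $\mathbb C^2$, $\mathbb{CP}^2$ or $S^2\times S^2$ is Lagrangian isotopic to the Clifford torus (in $\mathbb C^2$), to the monotone Clifford torus (in $\mathbb{CP}^2$), or to the monotone product torus (in $S^2\times S^2$), respectively. For $\mathbb C^2$, a Lagrangian isotopy to some product torus $S^1(r_1)\times S^1(r_2)$ suffices. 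We may then also rescale radii through product tori to reach $r_1=r_2$, which is monotone in $\mathbb C^2$.

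Given a Lagrangian torus $L\subset X$, fix such a Lagrangian isotopy $\{L_s\}_{s\in[0,1]}$ and reverse it, so that $L_0$ is the monotone toric fiber and $L_1=L$.

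Next we check the hypotheses of Theorem \ref{main_thm}. Tori are spin, and $X$ is closed or convex at infinity ($\mathbb C^2$ is convex at infinity). For gradedness, we need the Maslov class to be realized as the obstruction to a grading, i.e.\ $2c_1(X)$ compatible. In these cases $H^1(X)=0$ and the Maslov class of the torus is even, so one should only need graded in the sense of $\mathbb Z/2$ or $\mathbb Z$ after a suitable choice of quadratic volume form on the complement of an anticanonical divisor. If the paper's convention demands a $\mathbb Z$-grading, I would replace it with the weaker condition actually used in the proof, namely that Maslov index is constant along the isotopy under the identification of $H_2(X,L_s)$. This holds automatically for any Lagrangian isotopy.

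Then we compute the disk potential of $L_0$ via Cho--Oh. For $\mathbb C^2$ it is $W=Y_1+Y_2$. For $\mathbb{CP}^2$ it is $Y_1+Y_2+1/(Y_1Y_2)$. For $S^2\times S^2$ it is $Y_1+Y_1^{-1}+Y_2+Y_2^{-1}$. Each of these is non-vanishing, since all $n_\beta=1$ for the basic classes. Theorem \ref{main_thm} then gives that $L=L_1$ bounds a Maslov index two $J$-holomorphic disk for every tame $J$.

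The main obstacle is the classification input. One must ensure that the cited isotopy results apply to all Lagrangian tori, not just monotone ones. For non-monotone tori in $\mathbb{CP}^2$ and $S^2\times S^2$, the Dimitroglou Rizell--Goodman--Ivrii result gives a Lagrangian isotopy to a product or Clifford-type torus. It remains to connect that torus further, through toric fibers, to the monotone one. This is a Lagrangian isotopy through moment-map fibers and is harmless, since Theorem \ref{main_thm} only requires a smooth Lagrangian isotopy, not a Hamiltonian one.
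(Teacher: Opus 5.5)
Your argument is the paper's: Dimitroglou Rizell--Goodman--Ivrii's Theorem A places every Lagrangian torus in the Lagrangian isotopy class of the monotone Clifford (resp.\ product) torus, whose Cho--Oh potential is nonzero, and Theorem~\ref{main_thm} then finishes; the detour through toric fibers is harmless but unnecessary, since Theorem A already says any two Lagrangian tori there are Lagrangian isotopic. One correction to your grading remark: gradedness is not used to make the Maslov index constant along the isotopy (that holds for any Lagrangian isotopy) but to justify the semipositivity conditions (I-5)/(II-5) of Proposition~\ref{UD_prop}, which are needed, e.g.\ at the end of Lemma~\ref{lem_W_m_not_zero}, to extract a single Maslov-two disk from a sum of disk classes, so your fallback would not replace it --- although the paper's own proof also does not verify this hypothesis for the $L_s$.
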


To see this, by a theorem of Rizell-Goodman-Ivrii \cite[Theorem A]{dimitroglou2016lagrangian}, any two Lagrangian tori in each of the above symplectic four-manifolds are Lagrangian isotopic. Hence every Lagrangian torus $L\subset X$ is Lagrangian isotopic to a monotone Clifford torus $L_0\subset X$, and its disk potential is nonzero. Indeed, after choosing suitable coordinates, the potentials in the three situations are
$W_{L_0}=Y_1+Y_2$, $W_{L_0}=Y_1+Y_2+\frac{1}{Y_1Y_2}$, and $W_{L_0}=Y_1+\frac{1}{Y_1}+Y_2+\frac{1}{Y_2}$
respectively, up to the Novikov weights determined by symplectic areas. Hence, Proposition \ref{Application_4mfd_prop} follows directly from Theorem \ref{main_thm}.

In 1988, Audin asked whether every Lagrangian torus in $\mathbb C^n$ bounds a holomorphic disk of Maslov index two \cite{audin1988fibres}. The question was first answered in dimension $n=2$ by Viterbo \cite{viterbo1990new} and Polterovich \cite{polterovich1991maslov}. This is basically recovered as a case of Proposition \ref{Application_4mfd_prop}, and we also obtain an Audin-type result for $\mathbb{CP}^2$ and $S^2\times S^2$ from a different approach.

In the monotone setting, the Audin's question was studied by Oh \cite{oh1996floer}, Buhovsky \cite{buhovsky2010maslov}, and Fukaya-Oh-Ohta-Ono \cite[Theorem 6.4.35]{FOOOBookTwo}; see also Damian \cite{damian2012floer}. The general case was proved by Cieliebak-Mohnke \cite{cieliebak2018punctured} using neck-stretching methods from symplectic field theory, and was also proved by Fukaya \cite{fukaya2006application} and Irie \cite{irie2020chain} using $L_\infty$ algebra structures in a loop space framework. More recently, Y. Li obtained certain extension to aspherical Lagrangian submanifolds in Liouville domains with finite first Gutt-Hutchings capacity, using the notion of cyclic dilations \cite{li2023aspherical}.

Note that the proposition is not intended to give another proof of Audin’s conjecture in full generality. Rather, it provides a different mechanism for producing Maslov index two holomorphic disks, and points toward possible connections between this question and other perspectives such as the non-archimedean analytic viewpoint developed in this paper.
 One useful feature of our argument is that the almost complex structure $J$ may be chosen arbitrarily, and it does not rely on the displaceability of $L$ in $X$ as usual. These may suggest that the mechanism could be relevant to disk-existence questions in some more general setting, and we will pursue this direction somewhere else.

\subsection*{Acknowledgment} This project was initiated during the author's visit to the University of Science and Technology of China (USTC) in 2024, when Jun Zhang asked about possible applications of the non-archimedean perspective on the SYZ conjecture. The author is grateful to Jun Zhang and USTC for their hospitality. The author also thanks Kenji Fukaya, Wenyuan Li, Mohammad Rabah, Paul Seidel for helpful conversations.

\section{Aspects of Berkovich geometry}

\subsection{Affinoid algebra and affinoid space}
The \textit{Novikov field} is defined as
\[
\Bbbk=\Lambda= \mathbb C(( T^{\mathbb R})) = \left\{
\sum_{i=0}^\infty a_i T^{\lambda_i} \mid a_i\in\mathbb C, \lambda_i\in\mathbb R,\lambda_i \nearrow +\infty
\right\}
\]
It is known that the Novikov field is a complete algebraically-closed non-archimedean field.
It admits a valuation map, denoted as $
\mathrm{val}: \Lambda\to \mathbb R \cup \{\infty\}$,
is defined by taking the smallest power: If $x=\sum_{i=0}^\infty a_i T^{\lambda_i}$ with $a_0\neq 0$ and $\{\lambda_i\}$ strictly increasing, then we define $\mathrm{val}(x) =\lambda_0$. If $x=0$, then we define $\mathrm{val}(x)=\infty$.
The corresponding norm $|x|=e^{-\mathrm{val}(x)}$ satisfies the non-archimedean triangle inequality $|x+y|\leqslant \max\{|x|, |y|\}$.
The valuation ring is $\Lambda_0:= \mathrm{val}^{-1}[0,\infty]=\{x\mid |x| \leqslant 1 \}$, also called the Novikov ring.
Its maximal ideal is $\Lambda_+:=\mathrm{val}^{-1}(0,\infty]=\{x\mid |x|<1\}$.
The multiplicative group of units is
\[
U_\Lambda:= \mathrm{val}^{-1}(0)=\{x\mid |x|=1\}
\]
Note that $U_\Lambda= \mathbb C^*\oplus \Lambda_+$ and $\Lambda_0 =\mathbb C\oplus \Lambda_+$. The standard isomorphism $\exp: \mathbb C^*\cong \mathbb C/ 2\pi i \mathbb Z$ naturally extends to $U_\Lambda\cong \Lambda_0/ 2\pi i \mathbb Z$. In particular, for any $y\in U_\Lambda$, there exists $x\in \Lambda_0$ with $y=\exp(x)$.

For our purpose, we assume that the ground field $\Bbbk$ is the above Novikov field.
Let $A$ be a commutative $\Bbbk$-algebra with the structure map $\Bbbk\to A$.
A \textit{multiplicative seminorm} on $A$ is a map
$
\|\cdot\|:A\to \mathbb R_{\ge 0}
$, extending the norm on $\Bbbk$, satisfying that for all $f,g\in A$ and all $c\in \Bbbk$, we have: (i) $\|0\|=0$ and $\|1\|=1$; (ii) $\|fg\|=\|f\|\cdot \|g\|$; (iii) $\|f+g\|\le \max\{\|f\|,\|g\|\}$.

Define the \emph{Berkovich spectrum}
$\mathcal M(A)$ to be the set of multiplicative seminorms on $A$ extending the norm on $\Bbbk$.
For $x\in\mathcal M(A)$, we usually denote by $|\cdot|_x: A \to \mathbb R_{\ge 0}$ the corresponding seminorm.
Given $f\in A$, we consider the evaluation map
\[
\ev_f:\mathcal M(A)\to \mathbb R_{\ge 0} \ , \qquad 
x\mapsto |f|_x =: |f(x)|  .
\]
where the notations are standard convention.
The Berkovich topology on $\mathcal M(A)$ is then defined as the coarsest topology for which
$\operatorname{ev}_f$ is continuous for every $f\in A$, and it is known that $\mathcal M(A)$ is a nonempty, compact Hausdorff topological space \cite{Berkovich_2012spectral}.

Fix an integer $n\ge 0$ and a \textit{polyradius} $r=(r_1,\dots,r_n)\in(\mathbb R_{>0})^n$.
We define the \emph{Tate algebra of polyradius $r$} (or \emph{polydisk algebra}) by
\[
\Bbbk \langle r^{-1}T\rangle
:=
\Bbbk \langle r_1^{-1} T_1, \dots, r_n^{-1}T_n \rangle:=
\left\{
\sum_{\nu\in\mathbb Z_{\ge 0}^n} a_\nu T^\nu
\ \middle|\
a_\nu\in \Bbbk,\ \  \lim_{|\nu|\to \infty}  |a_\nu|\, r^\nu = 0
\right\},
\qquad
r^\nu:=\prod_{i=1}^n r_i^{\nu_i},
\]
endowed with the Gauss norm
\[
\sum_\nu a_\nu T^\nu \longmapsto  \max_\nu \bigl(|a_\nu|\, r^\nu\bigr).
\]
When $r=(1,\dots,1)$, $\Bbbk\langle T_1,\dots,T_n\rangle= \Bbbk \langle r^{-1}T\rangle$ is the usual Tate algebra.

A $\Bbbk$-affinoid algebra is a Banach $\Bbbk$-algebra $A$
such that $A \cong \Bbbk\langle r^{-1}T\rangle / I $
for some $r\in(\mathbb R_{>0})^n$ and an ideal $I\subset \Bbbk \langle r^{-1}T\rangle$.
A strictly $\Bbbk$-affinoid algebra is such a quotient with $r=(1,\dots,1)$.
Let's denote by $|\Bbbk^\times|=\{|x| : x \in \Bbbk , x\neq 0\}$ the value group of $\Bbbk$.
When $|\Bbbk^\times|=\mathbb R_{>0}$, as is the case for the Novikov field considered here, then it is known that every $\Bbbk$-affinoid algebra is strictly $\Bbbk$-affinoid.
Accordingly, we will drop the adjective ``strictly'' and also omit the prefix ``$\Bbbk$-'', and just use the term \emph{affinoid algebra} from now on.
The Berkovich spectrum $\mathcal M(A)$ of an affinoid algebra $A$ is called an \textit{affinoid space}.

For affinoid algebras, a bounded homomorphism $\varphi^\sharp:A\to B$
induces a unique morphism of affinoid spaces $\varphi:\mathcal M(B)\to \mathcal M(A)$
by the formula $\varphi(y)=y\circ\varphi^\sharp$.
Conversely, a morphism of affinoid spaces $\varphi$ uniquely determines
the bounded homomorphism $\varphi^\sharp$ on global sections.
Therefore, the category of affinoid spaces is equivalent to the opposite category of
affinoid algebras.

A \emph{locally ringed space} is a pair $(X,\mathcal O_X)$ consisting of a
	topological space $X$ and a sheaf of commutative rings $\mathcal O_X$ on $X$,
	such that for every $x\in X$ the stalk $\mathcal O_{X,x}$ is a local ring.
An \emph{analytic space} is a locally ringed space
	$(X,\mathcal O_X)$ such that every point $x\in X$ admits an open neighborhood $U$
	for which there exists an affinoid $\Bbbk$-algebra $A_U$ and an isomorphism of
	locally ringed spaces
	\[
	(U,\mathcal O_X|_U)\ \cong\ \bigl(\mathcal M(A_U),\,\mathcal O_{\mathcal M(A_U)}\bigr).
	\]
	In other words, $X$ is locally isomorphic (as a locally ringed space) to an affinoid space.

\begin{defn}[Affinoid atlas]
	An \emph{affinoid atlas} on a Berkovich analytic space $(X,\mathcal O_X)$
	is a collection of pairs $\{(U_i,A_i)\}_{i\in I}$ such that:
	
	\begin{enumerate}
		\item the sets $\{U_i\}_{i\in I}$ form an open cover of $X$;
		\item for each $i$ there is an isomorphism of locally ringed spaces
		\[
		(U_i,\mathcal O_X|_{U_i})\ \cong\ \bigl(\mathcal M(A_i),\,\mathcal O_{\mathcal M(A_i)}\bigr).
		\]
	\end{enumerate}
\end{defn}

\begin{rmk}[Compatibility on overlaps]
	Let $\{(U_i,A_i)\}_{i\in I}$ be an affinoid atlas.
	Then for any $i,j$, the overlap $U_i\cap U_j$ is an open subset of both
	$\mathcal M(A_i)$ and $\mathcal M(A_j)$ (via the identifications above).
	The requirement that $(X,\mathcal O_X)$ is a locally ringed space glued from
	affinoid pieces means precisely that the induced identifications on $U_i\cap U_j$
	are compatible with the Berkovich structure sheaves.
	Equivalently, after possibly refining by an affinoid domain cover of $U_i\cap U_j$
	inside $\mathcal M(A_i)$, the transition maps are morphisms of affinoid spaces.
\end{rmk}

Starting from affinoid spaces, Berkovich constructs in \cite{Berkovich1993etale}
the category of \textit{analytic spaces} by a delicate gluing formalism.
Here we give a sketch.
Let $X$ be a topological space and let $(X_i)_{i\in I}$ be a family of subsets of $X$.
We say that $(X_i)$ is a \emph{$G$-covering} of $X$ if every point $x\in X$ admits a neighborhood of the form
$
U=\bigcup_{i\in J} X_i
$
for some \emph{finite} subset $J\subset I$ such that $x\in X_i$ for all $i\in J$.
Notice that $G$-coverings are not required to be open coverings; rather, they are required to cover neighborhoods of points after passing to finitely many members,
each of which contains the point.

Let $X$ be an analytic space in the sense of Berkovich. Then, $X$ is a topological space in which every point admits
a basis of compact neighborhoods.
The space $X$ has a class of compact subsets called its
\emph{affinoid domains}. 
Besides, $X$ also has another class of subsets consisting of the so-called \emph{analytic domains}. By definition, an analytic domain is a subset $V\subset X$ which admits a $G$-covering by
affinoid domains contained in $V$.
The $G$-topology on $X$ is defined as follows: its objects are the analytic domains of $X$, its morphisms are inclusions, and its covering families are the $G$-coverings.
We remark that a compact subset of $X$ which is a finite union of analytic domains of $X$ is an analytic domain.

The spaces most commonly considered in the classical approaches to non-archimedean analytic geometry
(such as Tate's rigid-analytic spaces \cite{Tate_origin})
coincide with what Berkovich calls \emph{good} analytic spaces: those for which every point admits
an affinoid neighborhood, and hence a basis of affinoid neighborhoods.
Good analytic spaces are often sufficient in practice.

To any scheme $X$ of finite type over a non-archimedean field, we can
associate an analytic space $X^{\mathrm{an}}$, called the \emph{analytification} of $X$.
If $X=\mathrm{Spec} A$ is affine, then the underlying set of $X^{\mathrm{an}}$ is the spectrum $\mathcal M(A)$
of multiplicative seminorms $\|\cdot\|_x:A\to\mathbb R_{\ge0}$ extending the norm on the ground field $\Bbbk$.
Alternatively, a point of $X^{\mathrm{an}}$ may be viewed as a pair $(\mathfrak p,|\cdot|_x)$ consisting of a
prime $\mathfrak p\subset A$ together with an absolute value on the residue field $\kappa(\mathfrak p)$ extending that of $\Bbbk$.
For general $X$, choose an affine open cover $X=\bigcup_i U_i$ with $U_i=\mathrm{Spec} A_i$ and set
$U_i^{\mathrm{an}}:=\mathcal M(A_i)$.
On overlaps $U_i\cap U_j$, these analytifications identify with common
analytic domains, and one defines $X^{\mathrm{an}}$ by gluing the spaces $U_i^{\mathrm{an}}$ along these domains.

\subsection{Tropicalization map}
\label{s_trop_map}
Set $\Lambda^* =\Lambda\setminus\{0\}$, and $(\Lambda^*)^n$ can be viewed as the variety $\mathrm{Spec} \ \Lambda[ y_1^\pm,\dots, y_n^\pm]$.
Occasionally, we will simply write $(\Lambda^*)^n$ for its analytification $(\Lambda^*)^{n,\mathrm{an}}$ if the context is clear.
Note that set-theoretically, the space $(\Lambda^*)^{n,\mathrm{an}}$ is the set of multiplicative semi-norms on the Laurent polynomial ring $\Lambda[ y_1^\pm,\dots, y_n^\pm]$ that extends the norm on $\Lambda$.
For example, an $n$-tuple $(y_1,\dots, y_n)$ of points in $\Lambda^*$ gives rise to a semi-norm, sending  a Laurent polynomial $f = \sum_{I} a_I y^I \in \Lambda[y_1^{\pm}, \dots, y_n^{\pm}]$ to $\big| f(y_1, \dots, y_n) \big|$ for the given norm on $\Lambda$. This is a multiplicative seminorm extending the norm on $\Lambda$, hence a point of $(\Lambda^*)^{\,n,\mathrm{an}}$. Such points are often referred to as the \textit{classical points}.

The \textit{tropicalization map} is defined as
\begin{equation}
	\label{trop_eq}
	\mathfrak{trop} : (\Lambda^*)^{n,\mathrm{an}} \to \mathbb R^n
\end{equation}
A modern reference is \cite[\S 12]{Formes_Chambert_2012}.
Explicitly, for a point $x \in (\Lambda^*)^{n,\mathrm{an}}$ corresponding to a multiplicative seminorm, often also denoted by $|\cdot|_x$, on $\Lambda[y_1^{\pm},\dots,y_n^{\pm}]$ extending the norm on $\Lambda$, one uses the coordinate variables $y_i$'s to define
$$\mathfrak{trop}(x) \;=\; \bigl(-\log|y_1|_x,\; \dots,\; -\log|y_n|_x\bigr) \in \mathbb R^n .$$
This is an analogue of the complex logarithmic map $(z_1,\dots,z_n)\mapsto (\log|z_1|,\dots,\log|z_n|)$. It is known that $\mathfrak{trop}$ is a continuous, surjective, and proper map with respect to the Euclidean topology on $\mathbb R^n$ and the non-archimedean analytic topology on $(\Lambda^*)^{n,\mathrm{an}}$.

Let 
\[Z = \mathfrak{trop}^{-1}(0) \subset (\Lambda^*)^{n,\mathrm{an}}
\]
be the central fiber of the tropicalization map, that is, the set of points whose coordinate seminorms are all equal to $1$.
It is the Berkovich spectrum of the Laurent Tate algebra
$$\mathcal{O}(Z) := \Lambda\langle y_1^{\pm},\dots,y_n^{\pm}\rangle,$$
the completion of $\Lambda[y_1^{\pm},\dots,y_n^{\pm}]$ with respect to the Gauss norm $\|\cdot\|_Z$: For $f=\sum_{h\in\mathbb{Z}^n} a_h y^h$, we have
\[
\|f\|_Z=\sup_{z\,\in\,Z}\bigl|f(z)\bigr|  = \max_h\, |a_h|.\]
More specifically, we have
$$\mathcal{O}(Z) = \Bigl\{ \sum_{h\in\mathbb{Z}^n} a_h y^h \;\Big|\; a_h\in\Lambda,\; |a_h|\to 0 \text{ as } |h|\to\infty \Bigr\}.$$

Let $\Delta$ be a rational convex polyhedron in $\mathbb{R}^n$, defined by a finite system of affine-linear inequalities
$$\sum_{j=1}^n b_{ij} u_j \geqslant c_i
\qquad (i = 1,\dots, N)$$
with $c_i\in\mathbb{R}$, $b_{ij}\in\mathbb{Z}$, and $(u_1,\dots, u_n)\in\mathbb{R}^n$.
The preimage of $\Delta$ under the tropicalization map is an affinoid domain in $(\Lambda^*)^{n,\mathrm{an}}$. More precisely, $\mathfrak{trop}^{-1}(\Delta)$ is the Berkovich spectrum of the affinoid algebra
$$
\mathcal A_\Delta= \mathcal O(\trop^{-1}(\Delta) )= \Lambda\langle T^{c_1} y^{-b_1}, \dots, T^{c_N} y^{-b_N} \rangle,$$
where $y^{-b_i} = y_1^{-b_{i1}}\cdots y_n^{-b_{in}}$ and $T$ is the formal variable of the Novikov field. Because the valuation map on $\Lambda$ is surjective onto $\mathbb{R}$, the real constants $c_i$ can be directly used as exponents of $T$ without the need for an abstract pseudo‑uniformizer.
Such domains are called \textit{polyhedral affinoid domains} (cf. the work of Einsiedler, Kapranov, and Lind \cite{EKL}), and these domains form a convenient class of local building blocks for the analytic space structure.

We can view elements in $\mathcal A_\Delta$ as analytic functions on the analytic space $\trop^{-1}(\Delta)$.
We introduce the notation $\|\cdot \|_{\sup,\Delta}$ for the supremum norm (spectral norm) of $\mathcal{A}_\Delta$, given by
\[
\|f\|_{\mathrm{sup}, \Delta}
\;=\; \sup_{x\,\in\,\mathfrak{trop}^{-1}(\Delta)}|f(x)|
\;=\; \max_{h}\;\bigl( |a_{h}| \; e^{-\,\inf_{u\in\Delta}\,\langle u,\,h\rangle} \bigr).
\]
This is the norm that makes $\mathcal{A}_\Delta$ a Banach algebra. 
For any two functions $f,g \in \mathcal{A}_\Delta$ and any point $x$,
we have $|(f+g)(x)| \le \max\bigl(|f(x)|,\, |g(x)|\bigr)$.
Taking the supremum over all $x$ preserves this inequality as
$\sup_x \max (|f(x)|,\, |g(x)| ) = \max (\sup_x |f(x)|,\, \sup_x |g(x)| ).$
Therefore,
$$\|f+g\|_{\sup,\Delta} \le \max\bigl(\|f\|_{\sup,\Delta},\, \|g\|_{\sup,\Delta}\bigr),$$
which is the non‑archimedean (ultrametric) triangle inequality. 

The completion of $\Lambda[y_1^\pm,\dots, y_n^\pm]$ with respect to $\|\cdot\|_{\sup, \Delta}$ is the polyhedral affinoid algebra
\begin{equation}
\label{eq_A_Delta_explicit}
\mathcal{A}_\Delta \;=\; \Bigl\{ \sum_{h\in \mathbb Z^n} a_h y^h \;\Big|\;
\lim_{|h|\to\infty} \bigl( \mathrm{val}(a_h) + \langle u, h\rangle \bigr) = \infty \ \text{for all } u\in\Delta \Bigr\}.
\end{equation}

More generally, for any neighborhood $V$ of $0$, we can set
\[
\mathcal{A}_V \;=\; \Bigl\{ \sum_{h\in \mathbb Z^n} a_h y^h \;\Big|\;
\lim_{|h|\to\infty} \bigl( \mathrm{val}(a_h) + \langle u, h\rangle \bigr) = \infty \ \text{for all } u\in V \Bigr\}.
\]
This need not be an affinoid algebra, and hence need not correspond to an affinoid domain; nevertheless, it still corresponds to the analytic domain \(\trop^{-1}(V)\).
Recall that every affinoid domain is an analytic domain, but not conversely.
For $0\in V $, the condition with $u=0$ forces $\operatorname{val}(a_h)\to\infty$, i.e. $|a_h|\to 0$. Therefore every such series automatically converges on $Z$, giving a natural restriction homomorphism
\begin{equation}
\label{eq_res_Z}
\operatorname{res}_{Z,V}: \mathcal{A}_V \longrightarrow \mathcal{O}(Z).
\end{equation}

\subsection{Germs of analytic functions}
\label{s_germ_function}

Let $\mathcal O$ denote the structure sheaf of analytic functions on $(\Lambda^*)^{n,\mathrm{an}}$ with respect to the analytic topology.
The ring of germs of analytic functions along $Z$ is defined as
$$\mathscr{G} \;:=\; \varinjlim_{\mathcal{U} \supset Z} \mathcal{O}(\mathcal{U}),$$
where the direct limit runs over all analytic open neighborhoods $\mathcal{U}$ of $Z$ in $(\Lambda^*)^{n,\mathrm{an}}$.
Concretely, a germ is represented by a pair $(\mathcal{U}, f)$ with $f \in \mathcal{O}(\mathcal{U})$ an analytic function on $\mathcal{U}$, and two such pairs $(\mathcal{U}, f), (\mathcal{U}', f')$ define the same germ if $f$ and $f'$ coincide on some smaller open set $\mathcal{V} \subseteq \mathcal{U} \cap \mathcal{U}'$ containing $Z$.
Because the preimages $\mathfrak{trop}^{-1}(V)$, with $V$ an open neighbourhood of $0$ in $\mathbb{R}^n$, form a fundamental system of neighbourhoods of $Z$ in the analytic topology, every germ can be represented by a function on a domain of the form $\mathfrak{trop}^{-1}(V)$ where $0 \in V$ is open.
In other words, the germ ring can be also defined as the direct limit
$$\mathscr{G} \;=\; \varinjlim_{V \ni 0} \mathcal{O}\bigl(\mathfrak{trop}^{-1}(V)\bigr) = \varinjlim_{V \ni 0} \mathcal A_V ,$$
where $V$ runs over open neighbourhoods of $0 \in \mathbb{R}^n$.
Concretely, a germ is an equivalence class of pairs $(\mathfrak{trop}^{-1}(V), f)$ with $f\in\mathcal{A}_{V}$, two pairs being equivalent if the functions agree on some common smaller domain. 
A germ is represented by an analytic function $f \in \mathcal{A}_V$ for some $V\ni 0$; two such pairs are identified if the functions coincide on a common smaller domain.

Any element of $\mathscr G$ defines an analytic function on $Z=\trop^{-1}(0)$. Indeed, taking the direct limit over all $V$ in \eqref{eq_res_Z} yields the restriction map \[
\operatorname{res}_Z: \mathscr{G}\to\mathcal{O}(Z).\]

\begin{prop}
\label{prop_res_Z_injective}
$\operatorname{res}_Z$ is injective.
\end{prop}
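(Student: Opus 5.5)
We need to show that if a germ restricts to zero on $Z$, then the germ itself is zero. Take a representative $f\in\mathcal A_V$ for some open $V\ni 0$, written as a Laurent series $f=\sum_{h\in\mathbb Z^n} a_h y^h$ satisfying the convergence condition in the definition of $\mathcal A_V$. By \eqref{eq_res_Z}, $\operatorname{res}_{Z,V}(f)$ is the same series regarded as an element of $\mathcal O(Z)=\Lambda\langle y_1^\pm,\dots,y_n^\pm\rangle$. So the plan has two parts: first show that the coefficients are determined by the function on $Z$, and then show that the function on $\trop^{-1}(V)$ is determined by its coefficients.

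\emph{Step 1: coefficients from $Z$.} Suppose $\operatorname{res}_Z f=0$ in $\mathcal O(Z)$. An element of $\mathcal O(Z)$ is a convergent Laurent series, and its Gauss norm is $\max_h|a_h|$. This norm equals the supremum norm over $Z$, as recorded above. Hence $\operatorname{res}_Z f=0$ forces $\max_h |a_h|=0$, so $a_h=0$ for all $h$. If one prefers to avoid identifying $\mathcal O(Z)$ with series abstractly, one can argue via the Gauss point instead. The Gauss point $\xi\in Z$ is the seminorm $\sum a_h y^h\mapsto\max_h|a_h|$. Vanishing of $f$ on $Z$ in particular gives $|f(\xi)|=0$, hence $\max_h|a_h|=0$.

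\emph{Step 2: from coefficients to the germ.} Now $f$ is the zero series in $\mathcal A_V$. As a function on $\trop^{-1}(V)$ it is therefore zero. Concretely, for every polyhedral affinoid domain $\trop^{-1}(\Delta)$ with rational polyhedron $\Delta\subset V$, the element $f$ is zero in $\mathcal A_\Delta$, whose sup norm is $\max_h|a_h|e^{-\inf_\Delta\langle u,h\rangle}=0$. Such domains $G$-cover $\trop^{-1}(V)$, so $f=0$ in $\mathcal O(\trop^{-1}(V))$ and the germ vanishes.

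The main subtle point is whether the identification $\mathcal O(\trop^{-1}(V))=\mathcal A_V$, i.e.\ that every analytic function there is a Laurent series, is legitimate. I will take it as given from the paper's description of germs as $\varinjlim\mathcal A_V$. If needed, it can be justified by gluing the series expansions on polyhedral affinoids $\trop^{-1}(\Delta)$ covering $V$, using uniqueness of Laurent coefficients on each $\mathcal A_\Delta$. That uniqueness follows from the same Gauss-norm argument applied to any $\Delta$. With this identification, the injectivity reduces to the fact that the Gauss norm on $Z$ is a norm and not merely a seminorm on coefficient sequences.
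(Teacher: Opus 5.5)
Your proof is correct and has the same outer structure as the paper's: represent the germ by a series $\sum_h a_h y^h\in\mathcal A_V$, show that every $a_h$ vanishes, and conclude. The difference is in how vanishing on $Z$ is turned into vanishing of the coefficients.

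\textbf{The paper's route.} It normalizes so that $\max_h|a_h|=1$ and reduces modulo $\Lambda_+$ to get a nonzero complex Laurent polynomial $\bar f$. Vanishing of $f$ at the classical points $\bar y\in(\mathbb C^*)^n\subset Z$ would then force $\bar f\equiv 0$ on $(\mathbb C^*)^n$, which is impossible.

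\textbf{Your route.} You either read $\operatorname{res}_Z f=0$ directly as vanishing of the series in $\mathcal O(Z)$, using the recorded identity $\|f\|_Z=\sup_Z|f|=\max_h|a_h|$, or you evaluate at the Gauss point $\xi$.

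\textbf{Comparison.} The paper's argument is self-contained. It is in effect a proof of the inequality $\sup_Z|f|\ge\max_h|a_h|$ that you cite, and it gives slightly more: vanishing at the $\mathbb C^*$-valued classical points of $Z$ already suffices. Your Gauss-point variant is shorter, but it uses a non-classical Berkovich point. It also needs the Gauss norm to be multiplicative, so that $\xi$ really lies in $Z$. That fact (Gauss's lemma) is itself proved by the same reduction-mod-$\Lambda_+$ argument. On the other side, your Step 2 makes explicit the passage from vanishing coefficients to a vanishing germ, which the paper leaves implicit in its identification $\mathscr G=\varinjlim\mathcal A_V$.
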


\begin{proof}
This is essentially the same thing as \cite[Proposition 1.7]{Yuan_unobs}.
Let $f\in\mathscr G$ with $\operatorname{res}_Z(f)=0$. Pick a small neighbourhood $V\ni 0$ and write $f=\sum_{h\in\mathbb Z^n} a_h y^h$ in $\mathcal A_V$. Then $\operatorname{val}(a_h)\to\infty$, so $|a_h|\to 0$ and the set $\{|a_h|\}$ attains a maximum. 
Arguing by contradiction, suppose some $a_h\neq 0$.
By rescaling $f$ if necessary, we may assume $\max_h|a_h|=1$ and pick $h_0$ with $|a_{h_0}|=1$. In particular, all coefficients lie in the valuation ring $\Lambda_0=\{x: |x|\le 1\}$. Reducing modulo $\Lambda_+$ gives a nonzero Laurent polynomial $\bar f=\sum_h \bar a_h y^h$ over $\mathbb C$ with $\bar a_{h_0}\neq 0$ and only finitely many non‑zero terms (since $|a_h|\to 0$). Since $f$ vanishes on $Z$, we obtain that $\bar f(\bar y_1,\dots,\bar y_n)=0$ for all $\bar y\in(\mathbb C^*)^n$. This is impossible for a non‑zero Laurent polynomial. Thus all $a_h=0$ and $f=0$.
\end{proof}

\begin{rmk}
\label{rmk_germ_example}
In general, the map $\operatorname{res}_Z$ is
	not surjective. Let's give an example
	for \(n=1\). Let \(y\) be the coordinate on \((\Lambda^*)^{\mathrm{an}}\).
Consider the series
$f:=\sum_{m=1}^{\infty} T^{\sqrt m}y^m$.
Since $\val (T^{\sqrt m} )=\sqrt m\to\infty$, this defines an analytic function on $Z$.
We claim that \(f\) does not extend to any neighbourhood of \(Z\). Indeed,	suppose that \(f\) extended to an analytic function on
	\(\mathfrak{trop}^{-1}(V)\) for some open neighbourhood \(V\ni 0\).
The series would have to converge on the polyhedral domain corresponding to some interval $[-\delta,\delta]\subset V$. For an analytic function on
	\(\mathfrak{trop}^{-1}([-\delta,\delta])\), the coefficients must satisfy
	$\val(a_m)+um \to +\infty$
	for every \(u\in[-\delta,\delta]\). Applying this to
	\(a_m=T^{\sqrt m}\) and taking \(u=-\delta\), we get
	$\val(a_m)+um =\sqrt m-\delta m$,
	which tends to \(-\infty\), not to \(+\infty\). This is a contradiction.
\end{rmk}

\paragraph{An intrinsic view.}
In the discussion above we used the Laurent polynomial ring
$\Lambda[y_1^{\pm},\dots,y_n^{\pm}]$, which is the group ring
$\Lambda[\mathbb Z^{\,n}]$ of the free abelian group $\mathbb Z^{\,n}$.
Note that we may develop an intrinsic point of view by starting with an arbitrary finitely generated abelian group $\mathcal H$.

By the structure theorem for such groups we can write
$\mathcal H \;\cong\; \mathbb Z^{\,n} \oplus G$ where $G$ is a finite abelian group (the torsion subgroup).
The group ring over the Novikov field is then
$$\Lambda[\mathcal H] \;\cong\; \Lambda[\mathbb Z^{\,n}] \otimes_\Lambda \Lambda[G].$$

Recall that $\Lambda = \mathbb C((T^{\mathbb R}))$ is algebraically closed and of characteristic zero, which may be used to simplify the structure of $\Lambda[G]$.
If $G = \langle g \mid g^m = 1 \rangle \cong \mathbb Z / m \mathbb Z$ is a cyclic group, then choosing the generator $g$ gives an isomorphism of $\Lambda$-algebras
$\Lambda[G] \cong \Lambda[x] / (x^m - 1)$. Since $\Lambda$ is algebraically closed, the polynomial $x^m - 1$ splits completely:
$x^m - 1 \;=\; \prod_{\zeta \in \mu_m} (x - \zeta)$,
where $\mu_m \subset \Lambda$ denotes the set of $m$-th roots of unity.
Since the characteristic of $\Lambda$ is zero, the derivative $m x^{m-1}$ has no common root with $x^m - 1$ and all $m$ roots are distinct. Hence,
$ \Lambda[G]\cong  \Lambda[x] / (x^m - 1) \;\cong\; \prod_{\zeta \in \mu_m} \Lambda[x] / (x - \zeta) \;\cong\; \prod_{\zeta \in \mu_m} \Lambda.$

Let $G$ be an arbitrary finite abelian group. We can decompose it as a direct product of cyclic groups $G \cong \prod_{i=1}^k \mathbb Z / m_i \mathbb Z $.
Using the previous discussion and the fact that group rings turn direct products into tensor products, we have
$\Lambda[G] \;\cong\; \bigotimes_{i=1}^k \Lambda[\mathbb Z / m_i \mathbb Z]
\;\cong\; \bigotimes_{i=1}^k \big( \prod_{\zeta_i \in \mu_{m_i}} \Lambda \big)$.
This is again a direct product of $\Lambda$, that is, $\Lambda[G]$ is of the form $\Lambda\times \cdots \times \Lambda$.
Consequently,
$\Lambda[\mathcal H] \;\cong\; \Lambda[\mathbb Z^{\,n}]\otimes_\Lambda \Lambda[G]$ is a direct product of several $\Lambda[\mathbb Z^n]\cong \Lambda[y_1^\pm, \dots, y_n^\pm]$, and the algebraic variety $\operatorname{Spec}\Lambda[\mathcal H]$ is a
disjoint union of several copies of the algebraic torus $(\Lambda^*)^n$.
All the descriptions remain valid in this setting.

Accordingly, without loss of generality and for simplicity, we shall usually assume that $\mathcal H$ is a finitely generated \textit{free} abelian group. We may then develop the following intrinsic viewpoint.
Let $\mathbf T=\mathbf T_{\mathcal H}$ denote the Berkovich analytification of $\Spec \Lambda[\mathcal H]$. The tropicalization map is then written as
$
\trop=\trop_{\mathcal H}: \mathbf T_{\mathcal H}\to \mathbb R^n,
$
and the central fiber is
$
Z=Z_{\mathcal H}=\trop^{-1}(0),
$
as before; see \eqref{trop_eq}.
One can also similarly define $\mathcal A_\Delta$, $\mathscr G$, $\operatorname{res}_Z$, etc.
By choosing a basis of $\mathcal H$, one can also retrieve those defined in the above.
Anyway, there is no essential difference.

\subsection{Action groupoid for the analytic function germ}
\label{s_groupoid_germ}

An automorphism germ along $Z$ is an equivalence class $[U, \varphi]$ (or simply denoted by $[\varphi]$) of pairs $(U,\varphi)$, where $U$ is an analytic open neighbourhood of $Z$ in $(\Lambda^*)^{n,\mathrm{an}}$, $\varphi : U \to U'$ is an isomorphism of analytic spaces onto another open neighbourhood $U'$ of $Z$, and $\varphi(Z) = Z$.
Two such pairs $(U,\varphi)$ and $(V,\psi)$ are equivalent if they agree on some smaller analytic open neighbourhood $W\subset U\cap V$ of $Z$. The set of equivalence classes forms a group under composition. We denote this group by
$\operatorname{Aut}_Z(\mathscr G)$.

The automorphism group $\operatorname{Aut}_Z(\mathscr G)$ acts on $\mathscr G$ by $[\varphi]\cdot f = f\circ \varphi^{-1}$, which is again an analytic function on a suitable neighbourhood of $Z$, for $[\varphi]\in \operatorname{Aut}_{Z}(\mathscr G)$ and a germ $f\in\mathscr G$.
Let $G \subset \operatorname{Aut}_Z(\mathscr G)$ be a subgroup. We define $\mathscr G /\!/ G$ to be the category whose objects are the elements $f \in \mathscr G$. For $f,g \in \mathscr G$, the set of morphisms from $f$ to $g$ is
$$\operatorname{Hom}_{\mathscr G /\!/ G}(f,g) \;=\; \{\, \Phi \in G \mid \Phi(f) = g \,\}.$$
Composition is given by composition in $G$: if $\Phi : f \to g$ and $\Psi : g \to h$, then $\Psi \circ \Phi : f \to h$.
Then, one can readily check that $\mathscr G /\!/ G$ is a groupoid.
In fact, the identity on an object $f$ is the identity automorphism $\operatorname{id}_{\mathscr G} \in G$. Composition is associative because multiplication in $G$ is associative. Every morphism $\Phi : f \to g$ is invertible: its inverse $\Phi^{-1}\in G$ satisfies $\Phi^{-1}(g)=f$ and therefore yields a morphism $g \to f$.
In particular,
\[
\mathscr G / \!/ \operatorname{Aut}_Z(\mathscr G)\]
is a groupoid.

\section{$A_\infty$ algebras}
\label{s_A_infinity}

We begin with a review of our conventions of curved $A_\infty$ algebras used in \cite{Yuan_unobs,Yuan_I_FamilyFloer}.
Let $A$ be a graded vector space such that given $a\in A$, its grading is denoted by $|a|\in\mathbb Z$.
Assume $d_A: A\to A$ is a differential such that $|d_A(a)|=|a|+1$, and also we write $|d_A|=1$.
More generally, if a multilinear map $\varphi: A^{\otimes k}\to A$ always satisfies $|\varphi(a_1,\dots, a_k)|=p+|a_1|+\cdots+|a_k|$, then we write $|\varphi|=p$.
A basic example we consider is the de Rham complex or cohomology of a manifold $L$, denoted as $\Omega^*(L)$ or $H^*(L)$, but given $\eta$ in $\OL$ or $\HL$, we conventionally define its grading to be 
$|\eta|=\deg\eta -1$ where $\deg \eta$ is the de Rham degree.
This is also known as the \textit{shifted degree}.
The exterior differential also satisfies $|d\eta|=1+|\eta|$.

Suppose $(X,\omega)$ is a symplectic manifold as before and $L$ is a spin closed graded Lagrangian.
The relative homotopy group $H_2(X,L)$ is equipped with the symplectic area $E:H_2(X,L)\to \mathbb R$ and the Maslov index $\mu:H_2(X,L)\to  2 \mathbb Z$.
The unit of $H_2(X,L)$ is denoted as $0$.
For clarity, we will always focus on the following cases of graded vector spaces: $A=\HL$ or $A=\OL$;
$A=\Omega^*(P\times L)$ where $P$ is the convex hull of a finite subset in a Euclidean space.

Given graded vector spaces $A, A'$, we define 
\begin{equation}
	\label{CC_eq}
	\CC_{k,\beta}(A, A')=
	\Hom( A^{\otimes k}, A')
\end{equation}
to be the space of $k$-multilinear operators. Here $\beta\in H_2(X,L)$ is just an extra label but it deforms the grading: we say $\varphi \in \CC_{k,\beta}$ is of homogeneous degree $|\varphi|$ if for any $a_1,\dots, a_k\in A$, we have
\begin{equation}
	\label{grading_deform_eq}
	\mu(\beta)+
	|\varphi(a_1,\dots, a_k)|=|\varphi|+|a_1|+\cdots+|a_k|
\end{equation}
In the special case $k=0$, we view $\CC_{0,\beta}(A, A')=\Hom(\Bbbk, A')\cong A'$ for the ground field $\Bbbk$. For the unit $1$ of $\Bbbk$, $\mu(\beta)+|\varphi(1)|=|\varphi|$, so $|\varphi(1)|=|\varphi|-\mu(\beta)$. However, when the context is clear, we often abuse notation and do not distinguish between $\varphi(1) $ and $ \varphi$.

Define
\begin{equation}
	\label{CC_pi_eq}
	\CC_{\pi}(A, A')  \, \subseteq \prod_{\substack{ k\in\mathbb N \\ \beta\in  H_2(X,L) \\ (k,\beta)\neq (0,0)} } \CC_{k,\beta}(A, A')
\end{equation}
within the direct product, consisting of the systems $\mathfrak t=(\mathfrak t_{k,\beta})_{k\in\mathbb N,\beta\in H_2(X,L)}$ of multilinear operators $\mathfrak t_{k,\beta}$ with the \textit{gappedness conditions}: $\mathfrak t_{0,0}=0$; if $E(\beta)<0$ or $E(\beta)=0$, $\beta\neq 0$, then $\mathfrak t_\beta:=(\mathfrak t_{k,\beta})_{k\in\mathbb N}$ vanishes identically; for any $E_0>0$, there are only finitely many $\beta$ such that $\mathfrak t_{\beta}\neq 0$ and $E(\beta)\le E_0$.
Regarding (\ref{grading_deform_eq}), the degree of an operator system $\mathfrak t$ is denoted as $|\mathfrak t|$.

Next, we introduce
\begin{equation}
	\label{composition_Gerstenhaber_eq}
	\begin{aligned}
		(\g\diamond\f)_{k,\beta}=
		\sum_{\ell \ge 1}
		\sum_{k_1+\dots+k_\ell=k}
		\sum_{\beta_0+ \beta_1+\cdots +\beta_\ell=\beta}
		\g_{\ell,\beta_0} 
		\circ ( \f_{k_1,\beta_1}\otimes \cdots \otimes \f_{k_\ell,\beta_\ell} ) \\
		(\g\{ \h\})_{k,\beta} = \sum_{\lambda+\mu+\nu=k}\sum_{\beta'+\beta''=\beta} \g_{\lambda+\mu+1,\beta'} \circ (\id_{\#\deg'\h}^\lambda \otimes \h_{\nu,\beta''}\otimes \id^\mu)
	\end{aligned}
\end{equation}
Given $\f,\g, \h\in \CC_\pi$, one can check that $\g\diamond \f, \g\{\h\}\in \CC_\pi$ still satisfy the gappedness condition.

\begin{defn}
	\label{A_infty_algebra_defn}
	An \textit{$A_\infty$ algebra with labels} in $H_2(X,L)$ is a pair $(A,\m)$ which consists of an operator system $\m=(\m_{k,\beta})$ contained in $\CC_\pi(A,A)$, satisfying that $|\m|=1$ and the $A_\infty$ associativity relation
	\[\m\{\m\}=0\]
	It is called \textit{minimal} if $\m_{1,0}=0$.
	We call $(A,\m)$ \textit{unital} if there is a \textit{unit} $\one\in A$ such that $|\one|=-1$, $\m_{1,0}(\one)=0$, $\m_{2,0}(\one, x)=(-1)^{|x|-1}\m_{2,0}(x,\one)=x$, and $\m_{k,\beta} (\dots, \one,\dots)=0$ for $(k,\beta)\neq (1,0), (2,0)$. 
	
	An \textit{$A_\infty$ homomorphism with labels in $H_2(X,L)$} from $(A',\m')$ to $(A,\m)$ is defined as an operator system $\f=(\f_{k,\beta})$ in $\CC_\pi(A',A)$ such that $|\f|=0$ and
	\[
	\m\diamond \f=\f\{\m'\}
	\]
	Let $\one_1, \one_2$ be units of $\m'$ and $\m$.
	We call $\f:\m'\to \m$ \textit{unital} if $\f_{1,0}(\one_1)=\one_2$ and $\f_{k,\beta}(\dots, \one_1,\dots)=0$ for $(k,\beta)\neq (1,0)$.
	For simplicity, we will omit mentioning $H_2(X,L)$ if the context is clear.
\end{defn}

\begin{defn}
	$\m\{\m\}=0$ implies that $\m_{1,0}\circ \m_{1,0}=0$, and $\f_{1,0}$ is a cochain map from $(A',\m'_{1,0})$ and $(A,\m_{1,0})$.
	If $\f_{1,0}$ is a quasi-isomorphism, $\f$ is called \textit{an $A_\infty$ homotopy equivalence} or \textit{$A_\infty$ quasi-isomorphism}.
\end{defn}


\subsection{Algebra framework}
Let $P$ be the convex hull of a finite subset in a Euclidean space.
The pullback maps for the inclusion maps $L\to \{s\}\times L\subset P \times L$ and the projection map $P\times L\to L$ give rise to
$\eval^s: \Omega^*(P\times L)\to \Omega^*(L)$
for every $s\in P$ and 
$
\incl: \Omega^*(L) \to \Omega^*(P\times L)
$.

An element in $\Omega^*(P\times L)$ can be written in the form $\sum_{i=1}^n \alpha_i \otimes x_i(s)$, where $\alpha_i\in \Omega^*(P)$ and $x_i(s)\in \Omega^*(L)$ smoothly varies with $s\in P$.
Namely, it can be viewed as an $\Omega^*(L)$-valued differential form on $P$, and we have
\[
\Omega^*(L)_P :=\Omega^*(P)\otimes_{C^\infty(P)} C^\infty(P, \Omega^*(L)) \cong \Omega^*(P\times L)
\]
In general, we may introduce 
\[
A_P:= \Omega^*(P) \otimes_{C^\infty(P)} C^\infty(P, A)
\]
whenever $C^\infty (P, A)$ can be defined, such as when $A$ is a finite-dimensional vector space like $\HL$.
Assume $d=d_A$ is a fixed differential on $A$ such as the zero differential $d=0$ on $\HL$ or the exterior differential $d$ on $\OL$. Then there is a corresponding differential $d_P$ on $A_P$ defined by sending $\alpha\otimes x(s)$ to $\alpha\otimes d(x(s))+ \sum (-1)^{|\alpha|} ds_k\wedge \alpha \otimes \partial_{s_k} x(s)$ where $(s_k)$ are coordinates for $P$.
For simplicity, we write $d_{P}=1\otimes d+\sum ds_k\otimes \partial_{s_k}$.

\begin{defn}
	\label{pointwise-defn}
	A \textit{$P$-pseudo-isotopy of $A_\infty$ algebras} on $A$ is defined as an $A_\infty$ algebra $\M=(\M_{k,\beta})$  within $\CC_\pi(A_P, A_P)$ additionally satisfying that $\M_{1,0}=d_{P}$ as above and every $\M_{k,\beta}$ is $P$-pointwise in the following sense:
	\begin{align*}
		&\M_{k,\beta}(\eta_1 \otimes x_1,\dots, \sigma\wedge \eta_i\otimes x_i,\dots, \eta_k \otimes x_k ) \\
		&= (-1)^{\deg \sigma \cdot \left( |\M|
			+\sum_{a=1}^{i-1} (\deg \eta_a+|x_a|) \right)} \sigma \wedge \M_{k,\beta}
		(\eta_1\otimes x_1,\dots, \eta_k\otimes x_k)
	\end{align*}
	where $\deg$ is the usual de Rham degree and $|\cdot|$ is the shifted degree.	 
	If $P=\oi$, we often omit $P$.
	When we say a $P$-pseudo-isotopy is unital if it has a unit in the form $\incl(\one)$ for some $\one\in A$.
\end{defn}

The $P$-pointwise condition allows us to express the $P$-pseudo-isotopy $\M$ as 
\begin{equation}
	\label{1otimes_s_eq}
	\M=1\otimes \m^s+ \sum_{\varnothing \neq I \subseteq \{1,2,\dots, m\}} ds_I\otimes \mc^{I,s}
\end{equation}
where $\m^s$ defines an $A_\infty$ algebra and $ds_I=ds_{i_1}\wedge \cdots \wedge ds_{i_k}$ for $I=\{i_1,\dots, i_k\}$.
Each $(A, \m^s)$ is an $A_\infty$ algebra, called the \textit{restriction} of $(A_P, \M)$ at $s\in P$.
The following result is due to \cite{FuCyclic}. See also \cite{Yuan_I_FamilyFloer,Yuan_unobs} and \cite[21.25 \& 21.29]{FOOO_Kuranishi}.

\begin{lem}
	\label{pseudo_isotopy_pointwise_lem}
	When $P=\oi$, write the pseudo-isotopy $\M=1\otimes \m^s+ ds\otimes \mc^s$ as above. The $A_\infty$ relation $\M\{\M\}=0$ is equivalent to the following conditions:
	(i) $(A,\m^s)$ is an $A_\infty$ algebra, that is, $\m^s\{\m^s\}=0$;
	(ii) $\m^s_{1,0}=d$, independent of $s$, and $\mc^s_{1,0}=\frac{d}{ds}$; (iii) one has
	\[
	\hspace{-2em}
	\frac{d}{ds}\m^s_{k,\beta} 
	+ 
	\sum_{\substack{i+j+\ell=k }} 
	\sum_{\substack{ \beta_1+\beta_2=\beta\\
			(i+j+1,\beta_1)\neq (1,0)}}
	\mc^s_{i+j+1,\beta_1}  (\id_\#^i \otimes \m^s_{\ell,\beta_2} \otimes \id^j) 
	\ \ -
	\sum_{\substack{i+j+\ell=k }} 
	\sum_{\substack{ \beta_1+\beta_2=\beta \\
			(\ell,\beta_2)\neq(1,0)}}
	\m^s_{i+j+1, \beta_1} 
	(\id^i \otimes 
	\mc^s_{\ell,\beta_2} \otimes \id^j) =0
	\]
	Indeed, the last condition means 
	$
	\mc^s\{\m^s\}-\m^s\{\mc^s\}=0
	$.
	Alternatively, for the reduced form $\tilde \mc^s= \mc^s-\mc_{1,0}^s$ removing the $(1,0)$-component, we have
	\[
	\tfrac{d}{ds} \m^s + \tilde \mc^s \{\m^s\} -\m^s\{\tilde \mc^s\} =0
	\]
\end{lem}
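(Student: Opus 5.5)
The plan is to expand the single $A_\infty$ relation $\M\{\M\}=0$ for $\M=1\otimes\m^s+ds\otimes\mc^s$ on $A_{[0,1]}=\Omega^*([0,1])\otimes_{C^\infty}C^\infty([0,1],A)$ and sort the result by its $ds$-degree. Since $\Omega^*([0,1])$ has only degrees $0$ and $1$ and $ds\wedge ds=0$, every composition $\M_{\lambda+\mu+1,\beta'}\circ(\id^\lambda_\#\otimes\M_{\nu,\beta''}\otimes\id^\mu)$ splits into three kinds of terms:
\begin{itemize}
\item one term with no $ds$ at all;
\item two terms with exactly one $ds$: the outer operator contributes $ds$, or the inner one does;
\item a term with $ds$ appearing twice, which vanishes.
\end{itemize}
The $P$-pointwise property in Definition~\ref{pointwise-defn} is what lets us pull $ds$ out of any slot to the front, at the cost of a Koszul sign. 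Two consequences follow. First, both $\M\{\M\}$ and its components are determined by their values on inputs of the form $1\otimes x_i(s)$. Second, $\M\{\M\}=1\otimes R^s_0+ds\otimes R^s_1$ for explicit operator systems $R^s_0,R^s_1$ on $C^\infty([0,1],A)$. Thus $\M\{\M\}=0$ is equivalent to the pair of equations $R^s_0=0$ and $R^s_1=0$, and the task reduces to identifying these two equations with (i), (ii) and (iii).

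For the $ds^0$-part, on inputs with no $ds$ only the $1\otimes\m^s$ pieces survive. The one exception is the $(1,0)$-component $\M_{1,0}=d_P=1\otimes d+ds\otimes\partial_s$, whose $\partial_s$-term produces a $ds$ and so belongs to the $ds^1$-part. Consequently $R^s_0=\m^s\{\m^s\}$, which gives (i). Comparing $\M_{1,0}=d_P$ with the decomposition also gives (ii): $\m^s_{1,0}=d$ for every $s$ (so it is $s$-independent), and $\mc^s_{1,0}=\tfrac{d}{ds}$.

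For the $ds^1$-part, collect the two kinds of single-$ds$ terms, the outer and inner contributions from $\mc^s$. This gives $R^s_1=\pm\bigl(\mc^s\{\m^s\}-\m^s\{\mc^s\}\bigr)$, where the relative minus sign is the Koszul sign from commuting $ds$ (of degree $1$) past the outer operator $\m^s$ (of degree $|\M|=1$) and past the earlier inputs, as recorded by $\id_\#$ in the brace operation. Now isolate the $(1,0)$-components of $\mc^s$.
\begin{itemize}
\item In $\mc^s\{\m^s\}$, the term $\mc^s_{1,0}\circ\m^s_{k,\beta}$ equals $\tfrac{d}{ds}\circ\m^s_{k,\beta}$.
\item In $\m^s\{\mc^s\}$, the terms with $\mc^s_{1,0}$ inserted equal $\sum\m^s_{k,\beta}(\dots,\tfrac{d}{ds}x_i,\dots)$.
\end{itemize}
By the Leibniz rule these two combine to $\tfrac{d}{ds}(\m^s_{k,\beta})$, the $s$-derivative of the operator itself. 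The remaining terms are exactly the sums in (iii), with the exclusions $(i+j+1,\beta_1)\ne(1,0)$ and $(\ell,\beta_2)\ne(1,0)$. Rewriting with $\tilde\mc^s=\mc^s-\mc^s_{1,0}$ then yields $\tfrac{d}{ds}\m^s+\tilde\mc^s\{\m^s\}-\m^s\{\tilde\mc^s\}=0$.

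The converse direction is the same computation read backwards. Because $\M$ is determined by $\m^s$ and $\mc^s$ and the sorting by $ds$-degree is a direct-sum decomposition, (i)–(iii) together force $\M\{\M\}=0$.

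The main obstacle is sign bookkeeping. It has three parts:
\begin{itemize}
\item pulling $ds$ through inputs using the pointwise rule with shifted versus de Rham degrees;
\item checking that the signs from the $\id_\#$ twist in the brace make the two single-$ds$ contributions appear with opposite signs, rather than the same sign;
\item checking that the Leibniz combination produces $+\tfrac{d}{ds}\m^s$ and not a twisted version.
\end{itemize}
I would first verify these on the lowest case $k=0$ together with $k=1$ and $\beta=0$, where everything is explicit, and then argue that the general signs follow the same pattern, since each term involves only one transposition of $ds$. Gappedness is not an issue, since all sums are finite for fixed $(k,\beta)$.
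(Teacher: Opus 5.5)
The paper gives no proof of this lemma and defers to \cite{FuCyclic} and \cite[21.25 \& 21.29]{FOOO_Kuranishi}, where the argument is exactly your expansion of $\M\{\M\}$ by $ds$-degree: (ii) is read off from $\M_{1,0}=d_P$, and the $(1,0)$-parts of $\mc^s$ are combined via Leibniz into $\tfrac{d}{ds}\m^s$. Your proposal is correct and follows this standard route, and the relative sign you flag comes out uniformly: for an inner $ds\otimes\mc^s$ in slot $\lambda+1$, the Koszul twist $\id_\#^\lambda$ and the pointwise sign for extracting $ds$ differ by exactly $(-1)^{|\M|}=-1$, which gives $-\m^s\{\mc^s\}$ with untwisted identities (consistent with $|\mc^s|=0$), while outer $ds\otimes\mc^s$ terms give $+\mc^s\{\m^s\}$ with the twist intact; pulling $ds$ through $d_P$ is also legitimate, since $d_P$ graded-commutes with $ds\wedge$ even though it is not $C^\infty$-linear.
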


Note that we have a natural group homomorphism
\begin{equation}
	\label{partial_pi_2_X_L_eq}
	\partial: H_2(X,L) \to    H_1(L)
\end{equation}
Let $Z(A)=\{a\in A\mid d a=0, \ |a|=0 \}$.
In practice, we basically only concern the cases $A=\HL_P$ or $\OL_P$ for which $|a|=0$ means $\deg a=1$.
For $b\in Z(A)$ and $\sigma\in H_1 (L)$, we define $\sigma\cap b$ as follows.
First, we may write 
$
b=1\otimes \underline b(s)+\sum_i ds_i\otimes  b_i(s)
$
where $s=(s_i)$ are coordinates of $P$ and $\underline b(s), b_i(s)\in \HL$ or $\OL$.
If $A=\HL_P$, then $b\in Z(A)$ implies that $\partial_{s_i} \underline b(s)=0$, that is, $\underline b(s)\in H^*(L)$ is independent of $s$. Hence, we can define $\sigma\cap b:=\sigma \cap \underline b(s)$.
If $A=\OL_P$, then $d(\underline b(s))=0$ and $\partial_{s_k} \underline b(s) -d_L(b_k(s))=0$. Thus, the de Rham cohomology class $\mathfrak b$ of $\underline b(s)$ is independent of $s$, so we may define 
$
\sigma \cap  b :=\sigma \cap \mathfrak b
$.

\begin{defn}
	An operator system $\mathfrak t = (\mathfrak t_{k,\beta})$ in $\CC_\pi (A, A')$ is said to satisfy the \textit{divisor axiom} if for any $b\in Z(A)$ and $(k,\beta)\neq (0,0)$, we have
	$
	\sum_{i=1}^{k+1}  \mathfrak t_{k+1,\beta}(x_1,\dots, x_{i-1}, b, x_i,\dots ,x_k) =
	\partial \beta \cap b \cdot \mathfrak t_{k,\beta}(x_1,\dots,x_k)
	$
	An operator system $\mathfrak t=(\mathfrak t_{k,\beta})$ is said to be \textit{cyclically unital} if, for any $\e\in A$ with $|\e|=-1$ and $(k,\beta)\neq (0,0)$, we have
	$	\sum_{i=1}^{k+1} (-1)^{\sum_{a=1}^{i-1}|x_a|} \mathfrak t_{k+1,\beta} (x_1,\dots, x_{i-1}, \e, x_i, \dots, x_k) =0$
\end{defn}

\begin{prop}
	\label{UD_prop}
	\emph{There is a category $\UD\equiv \UD(L)$ with the following:}
	\begin{enumerate}
		\setlength{\itemsep}{0em}
		\item[(I)] \emph{An object in $\UD$ is an $A_\infty$ algebra $(A,\m)$ satisfying: (I-1) it is a $P$-pseudo-isotopy with $A=\HL_P$ or $\OL_P$, and $\m_{1,0}$ is equal to the natural differential; (I-2) the constant-one function gives the unit; (I-3) it is cyclically unital; (I-4) it satisfies the divisor axiom; (I-5) if $\m_{k,\beta}\neq 0$, then $\mu(\beta)\ge 0$.}
		
		\item[(II)] \emph{A morphism $\f$ in ${\UD}$ is an $A_\infty$ homomorphism such that
			(II-1) it is unital with respect to the constant-one units; (II-2) it is cyclically unital; (II-3) it satisfies the divisor axiom; (II-4) for $b\in Z(A)$, we have $\partial \beta \cap \f_{1,0} (b)   =  \partial \beta \cap b $; (II-5) if $\f_{k,\beta}\neq 0$, then $\mu(\beta)\ge 0$.}
	\end{enumerate}
\end{prop}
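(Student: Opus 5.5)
To show that $\UD$ is a category, I need three things: the composition of two morphisms satisfying (II-1)--(II-5) again satisfies them, the identity operator system is a morphism, and composition is associative. Composition is defined by $(\g,\f)\mapsto \g\diamond\f$ from \eqref{composition_Gerstenhaber_eq}, and the identity is $\id$ with $\id_{1,0}=\id_A$ and all other components zero. For the identity, (II-1)--(II-5) are immediate. The divisor axiom holds because $\id_{2,\beta}=0$ and $\partial 0\cap b=0$. The labels are only $\beta=0$, with $\mu(0)=0$.

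\emph{Associativity and the $A_\infty$ property.}
\begin{enumerate}
\item For $\f:\m''\to\m'$ and $\g:\m'\to\m$, I check $\m\diamond(\g\diamond\f)=(\m\diamond\g)\diamond\f=(\g\{\m'\})\diamond\f=\g\diamond(\f\{\m''\})$, expanding as in the standard bar-construction argument. This is formally the composition of coalgebra maps on the completed tensor coalgebra with labels in $H_2(X,L)$, and the labels simply add.
\item Associativity of $\diamond$ is the associativity of coalgebra composition.
\item $(\g\diamond\f)_{1,0}=\g_{1,0}\f_{1,0}$, and gappedness is preserved, as already noted after \eqref{composition_Gerstenhaber_eq}.
\item The degree of $\g\diamond\f$ is $0$.
\end{enumerate}

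\emph{Axioms (II-1), (II-2), (II-5) and the $P$-pointwise condition.}
\begin{enumerate}
\item Unitality (II-1): a term $\g_{\ell,\beta_0}(\f_{k_1,\beta_1},\dots,\f_{k_\ell,\beta_\ell})$ with $\one$ inserted in some input kills every $\f$-factor except $\f_{1,0}(\one)=\one$. That leaves $\g$ evaluated with $\one$ in some slot, which vanishes unless $(\ell,\beta_0)=(1,0)$ and all other factors are trivial.
\item Cyclical unitality (II-2): I sum the insertion of $\e$ over all positions. Grouping by which $\f$-factor receives $\e$, each group becomes a cyclic sum for that $\f_{k_j,\beta_j}$, which vanishes. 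The exception is when $\e$ forms an entire factor $\f_{1,0}(\e)$, and those terms assemble into a cyclic sum for $\g$ with $\f_{1,0}(\e)$ inserted, which also vanishes since $|\f_{1,0}(\e)|=-1$. The signs are the Koszul signs from $\id_\#$.
\item Labels (II-5): if $\mu(\beta_i)\ge0$ for all $i$, then $\mu(\sum\beta_i)\ge0$.
\item If objects are $P$-pseudo-isotopies, the $P$-pointwise (i.e.\ $\Omega^*(P)$-linear) condition is preserved under composition by the same sign bookkeeping.
\end{enumerate}

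\emph{Divisor axiom (II-3) and (II-4), which I expect to be the main obstacle.} Inserting $b\in Z(A)$ into all positions of the input of $(\g\diamond\f)_{k,\beta}$ splits into two kinds of terms.

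Inserting $b$ inside a factor $\f_{k_j,\beta_j}$ produces, by the divisor axiom for $\f$, the factor $\partial\beta_j\cap b$.

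When $b$ stands alone as a new factor, the summation over $\f$-factor decompositions includes factors $\f_{1,\beta'}(b)$ with $\beta'\neq0$. By the divisor axiom for $\f$ with $k=0$, the sum over positions of $\f_{1,\beta'}(b)$ equals $(\partial\beta'\cap b)\,\f_{0,\beta'}$, so these terms are absorbed into the first kind. The remaining terms are those with $\f_{1,0}(b)$ as a separate input to $\g$. Since $\f_{1,0}$ is a cochain map and $|b|=0$, we have $\f_{1,0}(b)\in Z(A)$, so the divisor axiom for $\g$ gives $\partial\beta_0\cap\f_{1,0}(b)=\partial\beta_0\cap b$ by (II-4) for $\f$.

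Summing, the coefficient is $\partial(\beta_0+\dots+\beta_\ell)\cap b=\partial\beta\cap b$, using that $\partial$ is a homomorphism \eqref{partial_pi_2_X_L_eq}. The delicate point is organizing this reindexing carefully so that no term is double-counted.

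Finally, (II-4) for the composite: for $b\in Z(A)$ the class of $\g_{1,0}\f_{1,0}(b)$ pairs with $\partial\beta$ as $b$ does, by applying (II-4) for $\g$ and then for $\f$, using that $\f_{1,0}(b)\in Z(A)$.
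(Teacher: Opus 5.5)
Your proposal is correct and follows the same route as the paper, which only asserts that the listed properties hold for $\id$ and are closed under $\diamond$-composition, and defers the routine algebra to Yuan's earlier work. Your divisor-axiom reindexing is exactly that check carried out in more detail: it absorbs the factors $\f_{1,\beta'}(b)$ using the $k=0$ case of the divisor axiom for $\f$, and handles the $\f_{1,0}(b)$ insertions with the divisor axiom for $\g$ together with (II-4). Two cosmetic points. Your homomorphism chain should end at $(\g\diamond\f)\{\m''\}$ rather than $\g\diamond(\f\{\m''\})$. Also, both the $k=0$ case of the divisor axiom and the identity law $\f\diamond\id=\f$ need the $\ell=0$ term $\g_{0,\beta}$ to be included in $\g\diamond\f$, whereas the paper's displayed formula has $\ell\ge 1$.
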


\begin{proof}
	This means the listed properties are closed under $A_\infty$-homomorphism composition. All verification should be  algebraic and straightforward; see \cite{Yuan_I_FamilyFloer,Yuan_unobs} for more details.
\end{proof}


We will refer to the conditions (I-5) and (II-5) as the \textit{semipositive conditions}. Geometrically, the $\mu$ stands for the Maslov index, and the condition is valid for the working $A_\infty$ algebras associated to graded Lagrangian submanifolds \cite[Lemma 3.1]{AuTDual}.
\textit{Henceforth, all $A_\infty$ algebras and $A_\infty$ homomorphisms should belong to the category $\UD$, unless explicitly stated otherwise.}

The {ud-homotopy} among morphisms in $\UD$ is introduced in \cite{Yuan_I_FamilyFloer}.

\begin{defn}
	\label{defn_ud_homotopy}
	We write 
	$
	\f_0\simud\f_1
	$ and
	say two morphisms $\f_0,\f_1$ from $(A',\m')$ to $(A,\m)$ in $\UD$ are \textit{ud-homotopic}, 
	if there is a morphism $\F$ from $(A',\m')$ to $(A_\oi, \M^{\tri})$ in $\UD$ such that $\eval^0 \F=\f_0$ and $\eval^1\F=\f_1$, where $\M^\tri$ is the trivial pseudo-isotopy induced by $\m$.
	Equivalently, it means there exists $\f_s,\h_s$ in $\CC_\pi(A',A)$, varying smoothly with $s\in\oi$, such that
	
	\begin{itemize}
		\setlength{\itemsep}{0.3em}
		\item[(a)] Each $\f_s$ is a morphism in $\UD$ from $(A',\m')$ to $(A,\m)$;
		\item[(b)] $ \frac{d}{ds} \circ \f_s = \h_s\{\m\} + \sum (-1)^\ast \ \m'\circ (\f_s \otimes \cdots \otimes \f_s \otimes  \h_s\otimes \f_s\otimes \cdots\otimes \f_s)$;
		\item[(c)] The $\h_s$ satisfies the divisor axiom, the cyclical unitality, and $(\h_s)_{k,\beta}(\cdots \one \cdots )=0$ for all $(k,\beta)$;
		\item[(d)] $|(\h_s)_{k,\beta}|= -1$. For $\beta$ with $\h_\beta\neq 0$, we have $\mu(\beta)\ge 0$.
	\end{itemize}
\end{defn}

The following result is proved in \cite{Yuan_unobs,Yuan_I_FamilyFloer}; a slightly simpler case is also known in \cite[Theorem 4.2.45]{FOOOBookOne}

\begin{prop}[Whitehead theorem]
	\label{prop_whitehead}
	Fix $\f\in \Hom_\UD ((A',\m'), (A,\m))$ such that $\f_{1,0}$ is a quasi-isomorphism of cochain complexes. Then, there exists $\g\in \Hom_\UD ( (A,\m) , (A',\m'))$, unique up to ud-homotopy, such that $\g\diamond \f\simud \id_{A'}$ and $\f\diamond \g\simud \id_A$. We call $\g$ a \emph{ud-homotopy inverse} of $\f$.
\end{prop}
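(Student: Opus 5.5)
The plan is to reduce to the case where $\f_{1,0}$ is an honest isomorphism, invert there by recursion on the gapped filtration, and transport back. The main tool is homological perturbation, carried out inside $\UD$, together with the path-object description of $\simud$ in Definition \ref{defn_ud_homotopy}.

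\textbf{Step 1 (canonical models in $\UD$).} For each object $(A,\m)$ of $\UD$ I would choose a strong deformation retraction of $(A,\m_{1,0})$ onto its cohomology $H$, given by maps $i,p$ and a homotopy $h$. For $A=\OL_P$ the natural choice is via harmonic forms and a Green operator, fibred over $P$; for $A=\HL_P$ one only contracts the $P$-direction. The contraction should satisfy:
\begin{itemize}
\item[(i)] $i,p$ preserve the constant-one unit;
\item[(ii)] $h$ annihilates the unit and the closed degree-one elements used in the divisor axiom;
\item[(iii)] $i,p$ preserve the classes $\partial\beta\cap b$;
\item[(iv)] everything is $P$-pointwise.
\end{itemize}
The usual tree formulas then give a minimal $A_\infty$ structure $\m^{\mathrm{can}}$ on $H$ and $A_\infty$ quasi-isomorphisms $\mathfrak i:H\to A$, $\mathfrak p:A\to H$. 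I would check that (I-2)--(I-5) and (II-1)--(II-5) hold term by term.
\begin{itemize}
\item Semipositivity is automatic, since $\mu$ is additive and every tree term is labelled by a sum of classes with $\mu\ge 0$.
\item The unit and cyclic-unitality properties follow from (i) and (ii).
\item The divisor axiom follows from (ii) and (iii): inserting $b$ into all positions of a tree distributes over the vertices, and $\partial$ is additive on the labels.
\end{itemize}

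\textbf{Step 2 (invert the minimal case).} Transfer $\f$ to $\f^{\mathrm{can}}=\mathfrak p\diamond\f\diamond\mathfrak i'$ between the minimal models. Since $\f_{1,0}$ is a quasi-isomorphism, $\f^{\mathrm{can}}_{1,0}$ is a linear isomorphism. I would then solve $\f^{\mathrm{can}}\diamond\g^{\mathrm{can}}=\id$ recursively.
\begin{itemize}
\item Set $\g_{1,0}=(\f^{\mathrm{can}}_{1,0})^{-1}$.
\item Determine $\g_{k,\beta}$ by induction on the pair $(E(\beta),k)$ in lexicographic order. Gappedness makes the set of relevant energies discrete, so the induction is well founded.
\item At each stage the equation is linear in the new unknown with invertible coefficient $\f_{1,0}$, so it determines $\g_{k,\beta}$ uniquely.
\end{itemize}
One checks that $\g$ is an $A_\infty$ homomorphism by applying $\f\diamond(-)$ and using injectivity. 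The $\UD$ properties transfer by the same induction. For instance, (II-4) holds because $\f_{1,0}$ preserves $\partial\beta\cap b$, and hence so does its inverse.

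\textbf{Step 3 (assemble and prove uniqueness).} Set $\g=\mathfrak i'\diamond\g^{\mathrm{can}}\diamond\mathfrak p$. The compositions $\g\diamond\f$ and $\f\diamond\g$ then differ from the identities only through $\mathfrak i\diamond\mathfrak p\simud\id$. This ud-homotopy is produced from $h$ by the same perturbation formulas, giving the operators $\h_s$ of Definition \ref{defn_ud_homotopy}(b)--(d). For this I need that $\simud$ is an equivalence relation compatible with $\diamond$ on both sides. I would prove this using the path object $(A_\oi,\M^\tri)$: concatenate and reverse along $[0,1]$ for transitivity and symmetry, and post-compose with the induced morphism on trivial pseudo-isotopies for compatibility with $\diamond$. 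Uniqueness is then formal: if $\g,\g'$ are both inverses, then $\g\simud\g\diamond\f\diamond\g'\simud\g'$.

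\textbf{Main obstacle.} I expect the hardest part to be Step 1: producing a contraction compatible simultaneously with the unit, cyclic unitality and the divisor axiom, in the $P$-parametrized setting. Verifying that the perturbation trees, and the homotopy $\h_s$ built from $h$, inherit the divisor axiom is the delicate sign and combinatorics check. My first attempt would be to choose $h$ to vanish on the image of harmonic forms, which contains the unit and the degree-one cohomology. Then every insertion of $b$ either passes through $i,p$ untouched or is killed by $h$, and this gives exactly the factor $\partial\beta\cap b$.
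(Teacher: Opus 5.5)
The paper gives no argument of its own here and defers to \cite{Yuan_unobs,Yuan_I_FamilyFloer} and \cite[Theorem 4.2.45]{FOOOBookOne}. Your overall plan is sound for plain gapped $A_\infty$ algebras: pass to canonical models, invert strictly when the linear part is invertible, and get uniqueness formally from compatibility of $\simud$ with $\diamond$. The strict inversion in Step 2 and the uniqueness argument are fine. The genuine gap is exactly where you put the main obstacle, and the fix you propose does not work.

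Requirement (ii) cannot hold for the elements the $\UD$ conditions actually quantify over. For a morphism or homotopy with source $A=\OL$:
\begin{itemize}
\item the divisor axiom uses every $b\in Z(A)$, i.e. every closed $1$-form;
\item cyclical unitality uses every $\e$ with $|\e|=-1$, i.e. every function.
\end{itemize}
For any contraction, apply $ip-\id=dh+hd$ to a function $f$. Since $hf=0$ for degree reasons and $ipf$ is constant, this gives $h(df)=ipf-f$, which is nonzero for nonconstant $f$. So your argument that each inserted $b$ ``passes through $i,p$ or is killed by $h$'' applies only when the inserted elements are harmonic. That means it covers only operations with inputs in $H$, namely the minimal model and $\mathfrak i$, which is all that Proposition~\ref{prop_minimal_model_UD} asserts. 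It does not cover $\mathfrak p:A\to H$. Nor does it cover the homotopy $\h_s$ realizing $\mathfrak i\diamond\mathfrak p\simud\id_A$, whose condition (c) in Definition~\ref{defn_ud_homotopy} quantifies over the same non-harmonic inputs.

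The tree formulas really do fail here. Take the harmonic contraction you propose, so that $ipf=\bar f$ is the average of $f$. At $(2,0)$ the transfer formula is, up to signs,
\[
\mathfrak p_{2,0}=p\circ\check\m_{2,0}\circ(h\otimes\id+ip\otimes h).
\]
For a function $g$ this gives
\[
\mathfrak p_{2,0}(g,df)=\pm p\bigl(\bar g(\bar f-f)\bigr)=0,
\qquad
\mathfrak p_{2,0}(df,g)=\pm p\bigl((\bar f-f)g\bigr)=\pm(\bar f\,\bar g-\overline{fg}).
\]
The second value is nonzero, for example, when $f=g$ is nonconstant. Hence two conditions fail at $(k,\beta)=(1,0)$, whatever the sign conventions, because one term is zero and the other is not:
\begin{itemize}
\item the divisor axiom with $b=df$, $x_1=g$;
\item cyclical unitality with $\e=g$, $x_1=df$.
\end{itemize}
The mirror-image formula fails symmetrically. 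So this $\mathfrak p$ is not a morphism in $\UD$. Consequently $\f^{\mathrm{can}}=\mathfrak p\diamond\f\diamond\mathfrak i'$ and $\g=\mathfrak i'\diamond\g^{\mathrm{can}}\diamond\mathfrak p$ are not known to lie in $\UD$, and the ud-homotopy $\mathfrak i\diamond\mathfrak p\simud\id_A$ used in Step 3 is unproved.

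What you need is a mechanism that imposes (II-1)--(II-5) and (c)--(d) at each stage, rather than inheriting them from the contraction. One option is an obstruction-theoretic induction on $(E(\beta),k)$ that builds the inverse and the homotopies inside the subspace of operator systems satisfying the ud conditions. That route also requires an argument that the obstruction classes can be killed within that subspace.
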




Next, we review a variant of homological perturbation.

Every $A_\infty$ algebra $\check \m$ is also accompanied by a minimal $A_\infty$ algebra $\m$, called the \textit{minimal model}, provided a choice of the so-called contraction.
Let $A$ and $H$ be two graded vector spaces equipped with differentials $d$ and $\delta$ respectively.
A triple $(i,\pi,G)$, consisting of two maps $i: H \to A$, $\pi: A \to  H$ of degree $0$ and a map $G: A \to A$ of degree $-1$, is called a \textit{contraction} if $d \circ i = i \circ \delta $, $\pi \circ d = \delta \circ \pi $, and $i\circ \pi-\id_{C} = d \circ G + G\circ d$.
Further, the $(i,\pi,G)$ is called a \textit{strong contraction}, or say it is \textit{strong}, if we have the extra conditions
$\pi\circ i - \id_{\mH}=0$, $G\circ G=0$, $G\circ i =0$, and $\pi\circ G=0$.
We can extend the standard homological perturbation process (cf. \cite[Theorem 5.4.2]{FOOOBookOne}) as follows.

\begin{prop} [Homological perturbation]
	\label{prop_minimal_model_homological_perturbation}
	Fix an $A_\infty$ algebra $(A,\check \m)$ with labels and a graded cochain complex $(H,\delta)$.
	Suppose $\check\m_{1,0}=d$ and $g=(i,\pi, G)$ is a contraction.
	Then, there is a canonical way to construct an $A_\infty$ algebra $(H, \m)$ and an $A_\infty$ homotopy equivalence
	$
	\mi: (H,\m)\to (A,\check  \m)
	$
	such that 
	$
	\mi_{1,0}=i$ and $\m_{1,0}=\delta$. We call $(H,\m)$ the \textit{minimal model $A_\infty$ algebra} of $(A,\check \m)$ with respect to $g$.
\end{prop}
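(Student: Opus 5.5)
We construct $(H,\m)$ and $\mi$ by the standard tree-sum formulas, organized by induction on the pair $(k,\beta)$ ordered first by energy $E(\beta)$ and then by $k$. By gappedness, this order is well-founded on the set of labels that actually occur. Define $\m_{1,0}=\delta$ and $\mi_{1,0}=i$. For $(k,\beta)\neq(1,0)$, set
\[
\mi_{k,\beta}=G\circ\sum \check\m_{\ell,\beta_0}\circ(\mi_{k_1,\beta_1}\otimes\cdots\otimes\mi_{k_\ell,\beta_\ell}),
\qquad
\m_{k,\beta}=\pi\circ\sum \check\m_{\ell,\beta_0}\circ(\mi_{k_1,\beta_1}\otimes\cdots\otimes\mi_{k_\ell,\beta_\ell}).
\]
In both sums, $(\ell,\beta_0)\neq(1,0)$, $\sum k_j=k$ and $\sum\beta_j=\beta$, and the same signs are used as in $\diamond$ of \eqref{composition_Gerstenhaber_eq}; the $(0,\beta)$ terms give the curvature. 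Every term on the right has strictly smaller $(E,k)$, since either $E(\beta_0)>0$, or $\beta_0=0$ and $\ell\ge2$ forces each $k_j<k$ or $E(\beta_j)<E(\beta)$. So the recursion is well defined. Equivalently, $\mi$ and $\m$ are sums over decorated planar rooted trees: interior vertices carry $\check\m_{\ell,\beta_0}$, interior edges carry $G$, leaves carry $i$, and the root carries $G$ or $\pi$ respectively. This gives the canonical construction.

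Next we check gappedness and degrees. Only finitely many $\beta$ below any energy bound can be decomposed into nonzero labels of $\check\m$, because each summand has positive energy or is $0$, and gappedness bounds the possible summands. Hence $\m,\mi\in\CC_\pi$, with $\m_{0,0}=0$. For degrees, $|G|=-1$ and $|\check\m|=1$, so $|\m|=1$ and $|\mi|=0$. Note that $\mu(\beta)$ is additive, so the deformed grading \eqref{grading_deform_eq} is respected term by term.

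The main step is to prove $\m\{\m\}=0$ and $\check\m\diamond\mi=\mi\{\m\}$, again inductively in $(E,k)$. Put $\Phi_{k,\beta}=(\check\m\diamond\mi)_{k,\beta}$ without the $(1,0)$ term of $\check\m$, so that $\mi_{k,\beta}=G\Phi_{k,\beta}$ and $\m_{k,\beta}=\pi\Phi_{k,\beta}$. The key intermediate identity to establish is
\[
d\,\Phi_{k,\beta}+\bigl(\text{terms of }\Phi\{\m\}\text{ and }\check\m\{\check\m\}\diamond\mi\text{ at lower order}\bigr)=0 ,
\]
obtained by expanding $\check\m\{\check\m\}=0$ applied to $\mi$ and using the induction hypothesis for all lower-order components. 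Write $d=\check\m_{1,0}$. Combining this identity with $i\pi-\id=dG+Gd$ gives
\[
(\check\m\diamond\mi)_{k,\beta}=d\,G\Phi+\Phi=i\pi\Phi-Gd\Phi=\mi_{1,0}\m_{k,\beta}+(\text{lower }\mi\{\m\}\text{ terms}),
\]
which is the homomorphism equation at order $(k,\beta)$. Applying $\pi$ to $d\Phi$ then yields $\delta\m_{k,\beta}+\cdots=(\m\{\m\})_{k,\beta}=0$. This bookkeeping is the main obstacle: the signs from $\id_\#$ and the curvature terms $k_j=0$ must be tracked carefully. I would follow the proof of \cite[Theorem 5.4.2]{FOOOBookOne}, replacing the filtration by the pair $(E(\beta),k)$. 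Notice that no strongness of $g$ is needed for the $A_\infty$ relations themselves.

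Finally, $\mi_{1,0}=i$, and $i$ is a quasi-isomorphism $(H,\delta)\to(A,d)$ because $\pi$ is a homotopy inverse of $i$: $i\pi\simeq\id$ via $G$, and $\pi i\simeq \id$ is needed. If only a contraction in the weak sense is given, I would note that $\pi i$ is still a quasi-isomorphism, since it is an isomorphism on cohomology when $\pi$ induces the inverse; the statement intends $H$ to model cohomology, for example $H=\HL$ with $\pi i=\id$. Therefore $\mi$ is an $A_\infty$ homotopy equivalence.
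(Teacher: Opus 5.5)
Your proposal follows the paper's route: the same recursive tree formulas \eqref{eq_induction_tree_minimal_model}, with the $A_\infty$ relations verified by induction on $(E(\beta),k)$. The paper merely asserts ``we can check'' and cites \cite{Yuan_I_FamilyFloer}; your inductive argument, which amounts to $d\Phi_{k,\beta}+(\Phi\{\m\})_{k,\beta}=0$ combined with $i\pi-\id=dG+Gd$, is correct and indeed uses no strongness. The one thing to fix is the ``weak sense'' fallback in your last paragraph, which is circular and in fact false. With only the paper's contraction axioms, $i$ need not be a quasi-isomorphism (take $A=0$, $H$ one-dimensional, $\delta=0$, $i=\pi=G=0$), so the homotopy-equivalence conclusion genuinely requires $\pi i=\id$, or at least that $i$ be a quasi-isomorphism. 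This holds for the strong harmonic contractions to which the paper actually applies the proposition.
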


\begin{proof}[Sketch of Proof]
	The induction formulas for them are 
	\begin{equation}
		\label{eq_induction_tree_minimal_model}
		\begin{aligned}
			\mi_{k,\beta} &= 
			\sum_{k_1+\cdots+k_\ell=k} \sum_{\beta_1+\cdots+\beta_\ell=\beta} \sum_{ (\ell,\beta_0)\neq (1,0)}
			G \circ \check \m_{\ell,\beta_0} \circ (\mi_{k_1,\beta_1}\otimes \cdots \otimes \mi_{k_\ell,\beta_\ell}) \\
			\m_{k,\beta} &= 
			\sum_{k_1+\cdots+k_\ell=k} \sum_{\beta_1+\cdots+\beta_\ell=\beta}
			\sum_{ (\ell,\beta_0)\neq (1,0)}
			\pi \circ \check  \m_{\ell,\beta_0} \circ (\mi_{k_1,\beta_1}\otimes \cdots \otimes \mi_{k_\ell,\beta_\ell})
		\end{aligned}
	\end{equation}
	We can check that they satisfy the desired $A_\infty$ relations.
	We refer to \cite[Theorem 2.52]{Yuan_I_FamilyFloer} for more details. Remark that in the special case $\delta=0$, it is also established in \cite{FuCyclic}.
\end{proof}

In practice, we mainly study the following two specific contractions:

\begin{itemize}
\itemsep 2pt
	\item 
	Take two graded cochain complexes $(\Omega^*(L), d)$ and $(H^*(L), d=0)$.
	Let $g$ be a Riemannian metric on $L$. The Hodge decomposition expresses $\Omega^*(L)$ as the direct sum of the space $\mathcal H_g$ of harmonic differential forms, and the images of the exterior derivative $d$ and its adjoint $d^*$. As $\mathcal H_g$ is naturally identified with $H^*(L)$, we can define $i = i(g)$ as the inclusion map from $\mathcal H_g$ to $\Omega^*(L)$, $\pi = \pi(g)$ as the projection from $\Omega^*(L)$ to $\mathcal H_g$.  It is standard that there exists a natural operator $G=G(g)$ such that $(i,\pi,G)$ is a strong contraction.
	Following \cite{Yuan_I_FamilyFloer}, we call it the \textit{harmonic contraction} induced by $g$ or the \textit{$g$-harmonic contraction}.
	
	\item Consider two graded cochain complexes $(\Omega^*(L)_\oi, d_{L\times \oi})$ and $(\HL_\oi, d=d_{[0,1]})$.
	Given a family of metrics $\pmb g=(g_s)_{0\leqslant s \leqslant 1}$ on $L$, let $(i_s,\pi_s,G_s)=(i(g_s),\pi(g_s), G(g_s))$ be as above. We can find operators $h_s$, $k_s$, and $\sigma_s$ such that $i(\pmb g)= 1\otimes i_s +ds\otimes h_s$, $\pi(\pmb g)= 1\otimes \pi_s +ds\otimes k_s$, and $G(\pmb g)=1\otimes G_s+ds\otimes \sigma_s$ form a strong contraction, called \textit{$\pmb g$-harmonic contraction}. See \cite[Lemma 3.6]{Yuan_I_FamilyFloer}.
\end{itemize}

%
%
%

\begin{prop}
	\label{prop_minimal_model_UD}
	Given an object $(\OL, \check \m)$ in the category $\UD$, utilizing Proposition \ref{prop_minimal_model_homological_perturbation} with the strong contraction $g$ transforms $\check \m$ into another object $(\HL, \m)$ in $\UD$, and simultaneously yields $\mi:\m\to\check \m$, an $A_\infty$ homotopy equivalence in $\UD$.
	Moreover, for a pseudo-isotopy $(\OL_\oi, \check \M)$ in $\UD$, applying Proposition \ref{prop_minimal_model_homological_perturbation} to $\check \M$ with the $\pmb g$-harmonic contraction produces a pseudo-isotopy $(\HL_\oi, \M)$ in $\UD$, and simultaneously yields an $A_\infty$ homotopy equivalence $\mI: \M \to \check \M$ in $\UD$.
\end{prop}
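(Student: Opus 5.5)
The plan is to verify, one by one, that the operator systems produced by the explicit tree formulas \eqref{eq_induction_tree_minimal_model} inherit each defining property (I-1)--(I-5) of an object and (II-1)--(II-5) of a morphism in $\UD$. The $A_\infty$ relations for $\m$ and for $\mi$ are already supplied by Proposition~\ref{prop_minimal_model_homological_perturbation}, so only the additional structural axioms need checking. The argument runs by induction on the pair $(E(\beta),k)$, ordered lexicographically. Gappedness makes this ordering well-founded, and each summand in \eqref{eq_induction_tree_minimal_model} involves only terms $\mi_{k_j,\beta_j}$ of strictly smaller order, because $(\ell,\beta_0)\neq(1,0)$.

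Several properties are formal. For the semipositivity conditions (I-5) and (II-5), each summand contributing to $\m_{k,\beta}$ or $\mi_{k,\beta}$ carries a decomposition $\beta=\beta_0+\cdots+\beta_\ell$ with every nonzero factor requiring $\mu(\beta_j)\ge 0$. Hence if $\mu(\beta)<0$ some factor has negative Maslov index and vanishes, by induction together with (I-5) for $\check\m$. The grading conditions $|\m|=1$ and $|\mi|=0$ follow from $|G|=-1$ and the fact that $\pi$ has degree $0$. For (II-4), note that $\mi_{1,0}=i$ is the inclusion of harmonic forms, so $\partial\beta\cap i(b)=\partial\beta\cap b$ holds because $i(b)$ represents the cohomology class $b$.

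The unit and cyclic-unitality axioms, (I-2), (I-3), (II-1) and (II-2), use the strong contraction identities $\pi i=\id$, $Gi=0$, $\pi G=0$ and $GG=0$, together with the fact that the constant function $1$ is harmonic, so that $i(\one)=\one$ and $G(\one)=0$. Inserting $\one$ (or an $\e$ of degree $-1$) into an input of a tree, it will be shown inductively that $\mi_{k,\beta}(\dots,\one,\dots)=0$ for $(k,\beta)\neq(1,0)$. The unit then propagates to the root vertex $\check\m_{\ell,\beta_0}$. There it either vanishes, when $(\ell,\beta_0)\neq(2,0)$, or produces the other input. In that case $G$ of it is killed by comparing with the term in which the unit sits one level higher; equivalently, one uses $G\circ i=0$ when the other input is $\mi_{1,0}$, and induction otherwise. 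The sign bookkeeping for cyclic unitality parallels the standard argument and should be routine.

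The main obstacle is the divisor axiom (I-4)/(II-3). Here $b\in Z(H)$ is a harmonic $1$-form, and summing over all insertion positions of $i(b)$ into a tree distributes across the vertices of the tree. At each internal vertex $\check\m_{\ell,\beta_0}$ the divisor axiom for $\check\m$ produces the factor $\partial\beta_0\cap b$. On the edges, $b$ is inserted into subtrees $\mi_{k_j,\beta_j}$, which by induction produce $\partial\beta_j\cap b$. Summing gives $\partial\beta\cap b$ by additivity of $\partial$ in \eqref{partial_pi_2_X_L_eq}. The delicate points are that the insertion must appear exactly as a direct input at each vertex, via $i(b)$ with $\mi_{1,0}=i$, and that $G$, $\pi$ do not interfere. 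Both hold because they are applied only after the vertex operation. Finally, the pseudo-isotopy case follows by the same argument applied to the $\pmb g$-harmonic contraction, which is strong, together with one additional check. The formulas are $\oi$-pointwise in the sense of Definition~\ref{pointwise-defn}, since $i(\pmb g)$, $\pi(\pmb g)$ and $G(\pmb g)$ are $C^\infty$-linear in $ds$ up to the stated $ds$-components, and $\M_{1,0}=d_{\oi}$ on $\HL_\oi$.
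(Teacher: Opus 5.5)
Your proposal is correct and follows the paper's approach: the paper only remarks that the $\UD$ axioms are straightforward to check from the homological perturbation formulas \eqref{eq_induction_tree_minimal_model}, deferring details to \cite{Yuan_I_FamilyFloer}, and your axiom-by-axiom tree induction is exactly that check. Two small corrections: in the unit step, a term $G\circ\check\m_{2,0}(\one,\mi_{k',\beta'}(\dots))$ with $(k',\beta')\neq(1,0)$ vanishes because $\mi_{k',\beta'}$ takes values in $\operatorname{im}G$ and $G\circ G=0$ (resp.\ $\pi\circ G=0$ at the root of $\m$), not by comparison with another term; and in the family case $i(\pmb g)(\one)=\one$ holds because the $ds\otimes h_s$ component lowers the form degree on $L$ and hence kills $\one$.
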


The above proposition is actually straightforward to check from the definitions.
	See \cite[Proposition 2.55 and Theorem 3.3]{Yuan_I_FamilyFloer}, and see \cite[Lemma 3.8 and Theorem 3.9]{Yuan_I_FamilyFloer} for more details.


\begin{prop}
	\label{from_pseudo_to_A_homo_prop}
	For $a<b$, there is a canonical way to assign to a pseudo-isotopy $( A_\ab, \M)$ an $A_\infty$ homomorphism 
	$
	\mC=\mC^\ab: (A, \m^a ) \to (A, \m^b)
	$
	with $\mC_{1,0}=\id_A$.
	Moreover, given a pseudo-isotopy $(A_\ab, \M)$ which is an object in $\UD$, its restriction $A_\infty$ algebra $\m^s$ at each $s\in \oi$ is also an object in $\UD$, and $\mC^\ab$ is a morphism in $\UD$.
\end{prop}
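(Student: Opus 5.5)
The plan is to build $\mC$ by solving the ODE in $s$ that Lemma \ref{pseudo_isotopy_pointwise_lem} suggests. By that lemma, write $\M=1\otimes\m^s+ds\otimes\mc^s$ on $A_\ab$, with $\m^s\{\m^s\}=0$ and
\[
\tfrac{d}{ds}\m^s+\tilde\mc^s\{\m^s\}-\m^s\{\tilde\mc^s\}=0 .
\]
This says that $\tilde\mc^s$ generates the infinitesimal change of $\m^s$ by a formal "gauge" coderivation. Accordingly, I would define a family $\mC^s\in\CC_\pi(A,A)$, $s\in\ab$, by the time-ordered flow
\[
\tfrac{d}{ds}\mC^s=\tilde\mc^s\diamond\mC^s, \qquad \mC^a=\id_A ,
\]
where the right-hand side means $\sum \tilde\mc^s_{\ell,\beta_0}\circ(\mC^s\otimes\cdots\otimes\mC^s)$ (up to the sign conventions of $\id_\#$). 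Then I set $\mC^\ab:=\mC^b$.

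\textbf{Solvability.} The ODE is solved inductively over the pairs $(k,\beta)$, using gappedness to order them. The energy takes a discrete set of values below any bound, and for fixed $\beta$ the number $k$ is finite. Since $\tilde\mc^s$ has no $(1,0)$-component, the equation for $\mC^s_{k,\beta}$ has the form
\[
\tfrac{d}{ds}\mC^s_{k,\beta}=(\text{known terms involving only }\mC^s_{k',\beta'}\text{ with }(E(\beta'),k')<(E(\beta),k)) .
\]
It is therefore solved by a plain integral $\int_a^s$. This gives $\mC^s_{1,0}=\id$, $|\mC^s|=0$, and gappedness of the result. Equivalently, and canonically, $\mC$ is the sum over ordered "time-ordered" products $\int_{a<s_1<\dots<s_r<b}\tilde\mc^{s_r}\diamond\cdots$. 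This explicit formula is what makes the assignment canonical.

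\textbf{$A_\infty$ relation.} Next I would show $\m^s\diamond\mC^s=\mC^s\{\m^a\}$ for all $s$. Set
\[
\Delta^s:=\m^s\diamond\mC^s-\mC^s\{\m^a\} .
\]
Differentiate in $s$, using the ODE for $\mC^s$ together with the identity $\tfrac{d}{ds}\m^s=\m^s\{\tilde\mc^s\}-\tilde\mc^s\{\m^s\}$. The terms should reorganize, via the pre-Lie/brace compatibilities between $\diamond$ and $\{\cdot\}$ and the relation $\m^s\{\m^s\}=0$, into a linear homogeneous equation of the form
\[
\tfrac{d}{ds}\Delta^s=\text{(operator built from }\tilde\mc^s,\mC^s)\cdot\Delta^s .
\]
Since $\Delta^a=\m^a-\m^a=0$, induction on $(E(\beta),k)$ gives $\Delta^s\equiv0$. \emph{This sign-heavy verification is the main obstacle.} My first attempt would be to pass to the bar coalgebra. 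There, $\hat\m^s$ and $\hat{\tilde\mc}^s$ are coderivations and $\hat\mC^s$ is a coalgebra map. The relation becomes $\tfrac{d}{ds}\hat\m^s=[\hat\m^s,\hat{\tilde\mc}^s]$, and $\hat\mC^s$ is the flow of $\hat{\tilde\mc}^s$. Then $\hat\mC^s\hat\m^a(\hat\mC^s)^{-1}=\hat\m^s$ follows from uniqueness of ODE solutions, filtration-degree by filtration-degree.

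\textbf{Properties in $\UD$.} Finally, the $\UD$ properties of $\m^s$ and $\mC$ are inherited by induction through the integral formula. Restriction to $s$ preserves the unit $\incl(\one)$, cyclic unitality, the divisor axiom and semipositivity, because $\M$ is $P$-pointwise. For $\mC$, each of the properties (II-1)–(II-5) is linear or multiplicative and is preserved under $\diamond$ with $\tilde\mc^s$ and under integration. The relevant inputs are:
\begin{itemize}
\item $\tilde\mc^s$ satisfies the divisor axiom and cyclic unitality, and vanishes on $\one$ (these follow from $\M$ satisfying (I-2)–(I-4));
\item $\mu(\beta)\ge0$ is additive under sums of classes;
\item (II-4) holds since $\mC_{1,0}=\id$.
\end{itemize}
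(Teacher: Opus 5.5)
Your overall route is the paper's: $\mC^{[a,b]}$ is obtained by integrating a flow equation in the parameter, solved recursively over the energy filtration (this is the paper's recursive integral formula), and the $\UD$ properties are inherited through that recursion. However, your flow has the wrong sign, and with it the central step fails.

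The paper uses $\tfrac{d}{du}\mC^{[a,u]}=-\tilde{\mc}^u\diamond\mC^{[a,u]}$. The minus sign is forced by Lemma~\ref{pseudo_isotopy_pointwise_lem}, which in bar-construction language reads $\tfrac{d}{ds}\hat{\m}^s=\hat{\m}^s\hat{\tilde{\mc}}^s-\hat{\tilde{\mc}}^s\hat{\m}^s$. This is an ordinary commutator, since $\tilde{\mc}^s$ is even. Suppose $\tfrac{d}{ds}\hat{\mC}^s=X^s\hat{\mC}^s$ and set $\Delta^s=\hat{\m}^s\hat{\mC}^s-\hat{\mC}^s\hat{\m}^a$. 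Then a direct computation gives
\[
\tfrac{d}{ds}\Delta^s=X^s\Delta^s+\bigl[\hat{\m}^s,\ \hat{\tilde{\mc}}^s+X^s\bigr]\,\hat{\mC}^s .
\]
With your choice $X^s=+\hat{\tilde{\mc}}^s$, the second term is $2\bigl(\tfrac{d}{ds}\hat{\m}^s\bigr)\hat{\mC}^s$. This is nonzero whenever $\m^s$ actually varies, because $\hat{\mC}^s$ is invertible. So $\Delta^s$ does not satisfy a homogeneous equation, and $\Delta^a=0$ yields nothing.

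Your conjugation argument has the same defect. $N^s=\hat{\mC}^s\hat{\m}^a(\hat{\mC}^s)^{-1}$ satisfies $\tfrac{d}{ds}N^s=[\hat{\tilde{\mc}}^s,N^s]$, whereas $\tfrac{d}{ds}\hat{\m}^s=-[\hat{\tilde{\mc}}^s,\hat{\m}^s]$. Uniqueness of ODE solutions therefore does not identify them. Concretely, if $\tilde{\mc}^s=\tilde{\mc}$ is independent of $s$, then $\hat{\m}^s=e^{-s\hat{\tilde{\mc}}}\hat{\m}^a e^{s\hat{\tilde{\mc}}}$, and the intertwiner is $e^{-s\hat{\tilde{\mc}}}$, not $e^{s\hat{\tilde{\mc}}}$.

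Nor can you repair this by reading your map as going from $\m^b$ to $\m^a$. Your flow of $+\hat{\tilde{\mc}}^s$ has later times on the left. The inverse of the correct flow of $-\hat{\tilde{\mc}}^s$ has later times on the right. The two agree only when the $\tilde{\mc}^s$ commute for different $s$.

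Once you correct the equation to $\tfrac{d}{ds}\mC^s=-\tilde{\mc}^s\diamond\mC^s$, the rest of your plan goes through and is essentially the paper's argument. Your time-ordered products then carry a sign $(-1)^r$. The paper only cites the $A_\infty$ verification; your version makes it explicit:
\begin{itemize}
\item The recursion is solvable because $\tilde{\mc}^s$ has no $(1,0)$-component. One small correction: for fixed $\beta$, infinitely many $k$ may occur. Lexicographic induction on $(E(\beta),k)$ is still well-founded, since gappedness makes the relevant energies discrete.
\item $\Delta^s$ now satisfies $\tfrac{d}{ds}\Delta^s=-\hat{\tilde{\mc}}^s\Delta^s$, so $\Delta^a=0$ gives $\Delta^s\equiv 0$.
\item (II-1)--(II-5) follow from the corresponding properties of $\tilde{\mc}^s$, exactly as you describe.
\end{itemize}
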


\begin{proof}[Sketch of proof]
	If we write $\M=1\otimes \m^s+ds\otimes \mc^s$ as before, then the following inductive formula is
	\begin{equation}
		\label{inductive_formu-Fubini-yield-eq}
		\mC_{k,\beta}^\ab =
		\sum_{\ell\ge 1}
		\sum_{ \substack{ 
				\beta_0+\beta_1+\cdots+ \beta_\ell = \beta\\
				k_1+\cdots+k_\ell = k \\
				(\ell,\beta_0)\neq(1,0)}
		}
		-\int_a^b du \cdot 
		\mc^u_{\ell,\beta_0}
		\circ
		\big(
		\mC_{k_1,\beta_1}^{[a,u]} \otimes \cdots \otimes \mC_{k_\ell,\beta_\ell}^{[a,u]}
		\big)
	\end{equation}
	for $(k,\beta)\neq(0,0),(1,0)$.
	Using the inductive formula, the desired conditions are direct to check; see \cite{FuCyclic} and \cite[Theorem 2.56]{Yuan_I_FamilyFloer}.
\end{proof}

Note that the condition (\ref{inductive_formu-Fubini-yield-eq}) can be also concisely viewed as
\begin{equation}
	\label{du_C_eq}
	\tfrac{d}{du} \mC^{[a,u]} = -\tilde\mc^u \diamond \mC^{[a,u]} \qquad (i=0,1)
\end{equation}
Here $\tilde \mc^u=\mc^u-\mc^u_{1,0}$ is the reduced form of $\mc^u$, removing the $\CC_{1,0}$-component of $\mc^u$, as in Lemma \ref{pseudo_isotopy_pointwise_lem}.

%
%

\subsection{Axiomatization of Fukaya's $A_\infty$ algebras}
\label{s_axiomatization}
We review the $A_\infty$ algebras for Lagrangians.
Our approach is based on the latest published monograph \cite{FOOO_Kuranishi} of Fukaya-Oh-Ohta-Ono, regarding the de Rham model as opposed to their earlier singular chain model in \cite{FOOOBookOne,FOOOBookTwo}. This de Rham model is also studied by Solomon and Tukachinsky \cite{solomon2016differential} \cite[Section 2]{solomon2016point}.

Let $\mathfrak J(X)$ be the space of almost complex structures tamed by $\omega$.
Let $L$ be a spin closed graded Lagrangian submanifold in $X$.
Given $J\in\mathfrak J(X)$, $\beta\in H_2(X,L)$, and $k\in\mathbb N$ with $(k,\beta)\neq (0,0), (1,0)$, we denote by 
$
\mathcal M_{k+1,\beta}(J,L)
$
the moduli space of equivalence classes $[\Sigma, u, \mathbf z]$ of \textit{$(k+1)$-boundary-marked $J$-holomorphic stable maps of genus 0 with one disk component bounded by $L$ in the class $\beta$}.
See \cite{MS,Frau08,FOOOBookOne}.
For simplicity, we sometimes just call them \textit{stable disks} if the context is clear.
Here the $\Sigma$ is a nodal surface with one boundary component $\partial \Sigma$, the $u: (\Sigma, \partial\Sigma)\to (X,L)$ is continuous and $J$-holomorphic restricted on each irreducible component, the $\mathbf z=(z_0,z_1,\dots,z_k)$ is the boundary marked points ordered counter-clockwisely. 
We also require $u$ is stable; 
The above equivalence relation refers to biholomorphic maps on the domains of two stable maps which identifies the nodal points, the marked points and the boundaries. 

The moduli space $\mathcal M_{k+1,\beta}(J,L)$ is first a well-defined \textit{set} and is also a compact Hausdorff \textit{topoloigcal space}, equipped with the stable map topology \cite[Theorem 7.1.43]{FOOOBookTwo}.
Abusing the notations, let $ J=\{J_t\mid t\in P\}$ be a smooth family of $\omega$-tame almost complex structures parameterized by a convex set $P$, we write
\begin{equation}
	\label{moduli_space_P}
	\textstyle
	\mathcal M_{k+1,\beta}(  J,L):=\bigsqcup_{t\in P} \ \{t\} \times \mathcal M_{k+1,\beta}(J_t,L)
\end{equation}
and the \textit{evaluation map} at the $i$-th marked point $z_i$ is
$
\ev_i:\mathcal M_{k+1,\beta}(J,L) \to L$ where $ i=0,1,2,\dots, k
$.
For the exceptional case $\beta=0$ in $H_2(X,L)$, we set $\mathcal M_{k+1,0}(J,L)=L\times \mathbb D^{k-2}$ or $P\times L\times \mathbb D^{k-2}$ if $k\geqslant 2$. We do \textit{not} define $\mathcal M_{k+1,0}(J,L)$ if $k=0,1$. See \cite[21.6(V)]{FOOO_Kuranishi}.

We call the collection 
\begin{equation}
	\label{moduli_space_system_eq}
	\mathbb M(J,L) =\Big\{ \mathcal M_{k+1,\beta}(J,  L) \mid (k,\beta)\in \mathbb N\times  H_2(X,L)	\Big\}
\end{equation}
the \textit{moduli space system} associated with $(J,L)$.

\begin{thm}
	\label{axiom_J_thm}
	We can associate to the moduli space system $\mathbb M(J,L)$ an $A_\infty$ algebra on $\Omega^*(L)$ independent of the choices up to pseudo-isotopy. Specifically,
	\begin{enumerate}[(a)]
		\itemsep 2pt
		\item \emph{Fix $J\in \mathfrak J(X)$ and $\mathbb M(J,L)$. There exists a set $\mathbb V(J; L)$ of virtual fundamental chains in which any element $\Xi$ is associated with an $A_\infty$ algebra $\check \m^{J,\Xi}$.}
		
		\item \emph{Fix $J, J'\in\mathfrak J(X)$ and $\Xi\in \mathbb V(J; L)$, $\Xi'\in\mathbb V(J' ; L)$ defining the $A_\infty$ algebras $\check \m^{J,\Xi}$, $\check \m^{J',\Xi'}$ as above.
			Take a path $\pmb J=(J_t)_{0\leqslant t \leqslant 1}$ connecting $J$ and $J'$ and the moduli system $\mathbb M(\pmb J,L)$, there exists a set $\mathbb V(\pmb J, \Xi, \Xi'; L)$ of virtual fundamental chains in which any element $\pmb \Xi$ is associated with a pseudo-isotopy $\check \M^{\pmb J , \pmb \Xi}$ on $\Omega^*(L)_\oi$ such that it restricts to $\check \m^{J,\Xi}$ and $\check \m^{J',\Xi'}$ at the two ends.}
	\end{enumerate}
\end{thm}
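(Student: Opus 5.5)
This statement is essentially the foundational package of Fukaya–Oh–Ohta–Ono's de Rham model, and the plan is to assemble it from the Kuranishi structure theory of \cite{FOOO_Kuranishi} rather than reprove it. For (a), I will first equip each $\mathcal M_{k+1,\beta}(J,L)$ with a Kuranishi structure with corners. I will require these structures to be compatible with the fiber product description of the boundary,
\[
\partial \mathcal M_{k+1,\beta}(J,L)=\bigcup \mathcal M_{k_1+1,\beta_1}(J,L)\,{}_{\ev_0}\!\times_{\ev_i}\mathcal M_{k_2+1,\beta_2}(J,L),
\]
and to be compatible with forgetting boundary marked points, so that the resulting operators satisfy the divisor axiom and cyclic unitality. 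I also require $\ev_0$ to be weakly submersive. I then take $\mathbb V(J;L)$ to be the set of systems of CF-perturbations (continuous families of multisections), constructed inductively on the energy $E(\beta)$ and on $k$. These are chosen compatibly with the boundary decomposition, and with $\ev_0$ strongly submersive on the perturbed zero sets. For each such $\Xi$ I define
\[
\check\m^{J,\Xi}_{k,\beta}(h_1,\dots,h_k)=\pm\,(\ev_0)_!\big(\ev_1^*h_1\wedge\cdots\wedge\ev_k^*h_k;\Xi\big),
\]
using integration along the fiber. For $\beta=0$ I use the exterior derivative and wedge product, and set the higher $(k,0)$ operators to zero. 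The $A_\infty$ relation $\check\m\{\check\m\}=0$ then follows from Stokes' formula for CF-perturbed integration along the fiber combined with the boundary decomposition and the composition formula for fiber products. Gappedness comes from Gromov compactness, since only finitely many $\beta$ with $E(\beta)\le E_0$ have nonempty moduli space. Semipositivity (I-5) holds because graded Lagrangians bound nonconstant disks only with $\mu\ge 0$ \cite[Lemma 3.1]{AuTDual}.

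For (b), I will run the same construction on the parametrized moduli spaces $\mathcal M_{k+1,\beta}(\pmb J,L)$ over $P=[0,1]$. The evaluation map becomes $(t,\ev_0)$ into $[0,1]\times L$. The point is to build the perturbations relative to the ends, so that their restrictions to $t=0,1$ coincide with the given $\Xi$ and $\Xi'$; this is the extension lemma for CF-perturbations in \cite{FOOO_Kuranishi}. Pushing forward along the fiber produces operators on $\Omega^*([0,1]\times L)$ that are pointwise in the sense of Definition~\ref{pointwise-defn}, because the $\Omega^*([0,1])$-factor pulls back through the projection. The same Stokes argument yields $\check\M\{\check\M\}=0$, with the boundary of the parameter interval accounting for the $d_{[0,1]}$ term in $\M_{1,0}$. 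By construction the restrictions at the two ends are $\check\m^{J,\Xi}$ and $\check\m^{J',\Xi'}$.

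The main obstacle is the simultaneous compatibility of the perturbations: with the corner and fiber-product structure at every energy level, with forgetful maps (for unitality and the divisor axiom), and with prescribed boundary data in the parametrized case. Everything else is formal bookkeeping of signs. I will therefore cite \cite[Chapters 21--22]{FOOO_Kuranishi} and \cite{solomon2016differential} for the inductive existence of such systems, rather than reconstruct them.
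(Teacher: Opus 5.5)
Your proposal follows essentially the same route as the paper. The paper does not prove this theorem internally: it adopts it as an axiom and defers to the de Rham construction of Fukaya--Oh--Ohta--Ono via Kuranishi structures, CF-perturbations and integration along $\ev_0$ (\cite[Theorem 21.35]{FOOO_Kuranishi}, exposited in the appendix of \cite{Yuan_unobs}), and your sketch is a faithful summary of what that citation contains.

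There are two small points of precision. First, FOOO do not obtain one compatible perturbation system for all $(k,\beta)$ at once; they work with energy cutoffs and then promote the truncated structures, so $\Xi$ is really that whole package of choices. Second, the $ds\otimes\frac{d}{ds}$ part of $\check{\M}_{1,0}$ is just the de Rham differential of $[0,1]\times L$ in Stokes' formula, not a contribution from the ends of the parameter interval. The strata over $t=0,1$ lie over the boundary of the target and only produce the restrictions to $\check{\m}^{J,\Xi}$ and $\check{\m}^{J',\Xi'}$.
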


A reader uncomfortable with virtual techniques may think of Theorem \ref{axiom_J_thm} as the \textit{axioms} for Fukaya's $A_\infty$ algebra:
To establish this, we refer to the appendix of \cite{Yuan_unobs} for an exposition based on the previous works of Fukaya et al \cite[Theorem 21.35]{FOOO_Kuranishi} (see also \cite{FOOODiskOne,FOOODiskTwo,FuCyclic}) to achieve these structures.
Note that $J$ represents concrete geometric data for almost complex structures, while $\Xi$ is an abstract choice of data for virtual techniques.

For our purposes, we shall also impose the following assumption.
 
\begin{assumption}
\label{assumption_empty_moduli}
In the setting of Theorem \(\ref{axiom_J_thm}\), for any auxiliary choice $\Xi$, the associated $A_\infty$ algebra \(\check{\m}^{J,\Xi}\) on $\Omega^*(L)$ satisfies the following vanishing property: if for $\beta \in H_2(X,L)$ with $\beta\neq 0$, there is no nonconstant $J$-holomorphic disk in the class $\beta$, then
$\check{\m}^{J,\Xi}_{k,\beta}=0$
for all $k\in\mathbb N$.
Moreover, for a one-parameter family $\pmb J=(J_t)_{t\in[0,1]}$ and any auxiliary choice $\pmb\Xi$, the associated pseudo-isotopy \(\check{\M}^{\pmb J,\pmb\Xi}\) satisfies the analogous property: if nonzero $\beta\in H_2(X,L)$ admits no nonconstant $J_t$-holomorphic disk for any $t\in[0,1]$, then
$\check{\M}^{\pmb J,\pmb\Xi}_{k,\beta}=0$
for all $k\in\mathbb N$.
\end{assumption}

\begin{rmk}
	\label{rmk_m_10_20}
For $\check \m=\check \m^{J,\Xi}$ as above, we have that $\check \m_{1,0}=d$ is the de Rham differential and $\check \m_{2,0}$ is the (signed) wedge product; see \cite[Deﬁnition 21.21 (2) \& (3)]{FOOO_Kuranishi}.
	The similar holds for the item (b); see \cite[Deﬁnition 21.29 (4) \& (5)]{FOOO_Kuranishi}.
\end{rmk}

The geometric intuition behind this assumption should be quite clear. The operation \(\check{\m}_{k,\beta}^{J,\Xi}\) is defined geometrically using the moduli space $\mathcal M_{k+1,\beta}(J,L)$. Hence, when this moduli space is empty, the corresponding operation, as defined for instance in \cite[Eq (22.17)]{FOOO_Kuranishi}, is expected to vanish. Similarly, one expects other virtual techniques, such as polyfold theory \cite{PolyfoldI}, to yield the same vanishing property, since the underlying moduli spaces are the same, though the virtual perturbation theories may differ. A complete treatment of this would take us beyond the scope of the present paper; therefore, we formulate it as the above assumption.
We also remark that the converse of the above assumption need not hold: the vanishing of the corresponding $A_\infty$ operations does not imply that the relevant moduli spaces are empty. Indeed, the moduli spaces may be nonempty, while the contributions of their elements cancel in the resulting operation.

\section{Superpotential as a functor from $A_\infty$ algebras to analytic functions}

Recall that $L$ is a closed spin graded Lagrangian submanifold of $X$. We denote by $\Lambda[[H_1(L)]]$ the \emph{formal group ring} of $H_1(L)$ over $\Lambda$: its elements are formal series of the form
$$\sum_{k=1}^{\infty} c_k Y^{\alpha_k},$$
where $c_k\in \Lambda$ and $\alpha_k\in H_1(L)$, with multiplication determined by $Y^\alpha Y^\beta = Y^{\alpha+\beta}$.
For simplicity, let us always assume that $H_1(L)$ is torsion-free; as we discussed at the end of Section~\ref{s_germ_function}, one can readily see that this is without loss of generality eventually. Note that specifying a basis $\{e_1,\dots, e_m\}$ of $H_1(L)$ yields an isomorphism $\Lambda[[H_1(L)]]\cong \Lambda[[Y_1^{\pm 1},\dots, Y_m^{\pm 1}]]$ via $Y^{e_i}\leftrightarrow{} Y_i$.

Let $\mathbf T_L$ denote the analytification of $\Spec \Lambda[H_1(L)]$. After choosing a basis of $H_1(L)$, we may identify $\mathbf T_L$ with $(\Lambda^*)^{n,\mathrm{an}}$ as in \eqref{trop_eq}. We then define the tropicalization map
$
\trop:\mathbf T_L\to \mathbb R^n$
and the central fiber $Z$ in the same way as in Section \ref{s_trop_map}.
Let $\mathscr G=\mathscr G_L$ be the germ of non-archimedean analytic functions along $Z$ as defined in Section \ref{s_germ_function}.

\subsection{Associated series}

We focus on those $A_\infty$ structures with underlying graded vector space fixed to be $H^*(L)$. 
Let \((H^*(L),\m)\) be an $A_\infty$ algebra in \(\UD=\UD(L)\). By \eqref{grading_deform_eq}, the degree condition \(|\m_{0,\beta}|=|\m|=1\) implies that
\[
\m_{0,\beta}=\m_{0,\beta}(1)\in H^{2-\mu(\beta)}(L),
\]
where we have slightly abused notation by using \(\m_{0,\beta}\) both for the operation and for its value on 1.
By Proposition \ref{UD_prop}, we know that nontrivial $\m_{0,\beta}$ satisfies $\mu(\beta)\geqslant 0$ and thus lies either in $H^0(L)$ or in $H^2(L)$. Since $L$ is connected, $H^0(L)\cong \mathbb R$ is generated by the constant-one function \(\one\).

Following \cite[Definition 4.5]{Yuan_unobs}, we define the \emph{superpotential} associated to \(\m\) by
\begin{equation}
	\label{eq_W_m}
	W_\m
	:=\sum_{\mu(\beta)=2} T^{E(\beta)}Y^{\partial\beta}\m_{0,\beta},
\end{equation}
and the \emph{obstruction series} associated to \(\m\) by
\[
Q_\m
:=\sum_{\mu(\beta)=0} T^{E(\beta)}Y^{\partial\beta}\m_{0,\beta}.
\]
We fix a basis $\{\Theta_1,\dots,\Theta_\ell\}$ of $H^2(L)$.
Then, we may write $Q_\m=Q_{\m,1}\Theta_1+\cdots+Q_{\m , \ell}\Theta_\ell$.

Note that \(W_{\m}\) and \(Q_{\m,i}\) are initially only Laurent formal power series, since \(\m\) is, at this stage, merely an abstract $A_\infty$ algebra. 
They restrict to analytic functions on the central fiber $Z$ if and only if $E(\beta)\to\infty$ as $|\partial\beta|\to\infty$ among all $\beta$ with nontrivial $\m_{0,\beta}\neq 0$.
Furthermore, they belong to the germ ring $\mathscr G$ if and only if for every $u\in V$, there exists an open neighborhood $V\subset\mathbb R^n$ of $0$ such that
$E(\beta)+\langle\partial\beta,u\rangle \to \infty$ as $|\partial\beta|\to\infty$ over all contributing classes $\beta$; equivalently, letting $\ell(\partial\beta)$ denote the minimal length of a loop representing $\partial\beta$ with respect to some metric on $L$, there exists a constant $c>0$ such that
\begin{equation}
\label{eq_overconvergent_condition_E_ell}
E(\beta)\;\ge\;c\,\ell(\partial\beta)
\end{equation}
for all $\beta$ with $\m_{0,\beta}\neq 0$.
We remark that, in geometric situations, the condition \eqref{eq_overconvergent_condition_E_ell} corresponds to the well-known reverse isoperimetric inequality \cite{ReverseI,ReverseII}.

\begin{defn}
\label{defn_properly_unobs_overconvergent}
	We call $(H^*(L),\m)$ \textit{overconvergent} if $W_\m$ and $Q_{\m,i}$ define elements of the germ $\mathscr G$.
	We call $(H^*(L),\m)$ \textit{properly unobstructed} if all the $Q_{\m, i}$'s are vanishing (cf. \cite[Definition 4.5]{Yuan_unobs}).
\end{defn}

The standard term in non-archimedean geometry for a series that converges on some neighborhood is ``overconvergent''. Thus, we borrow this terminology here.

Let $\f: (H^*(L), \m) \to (H^*(L), \m')$ be an $A_\infty$ homomorphism in $\UD$.
Following \cite{Yuan_unobs}, we consider $
P_{\f} := \sum_\beta T^{E(\beta)} Y^{\partial\beta} \f_{0,\beta} $.
By degree considerations, every nonzero term \(\f_{0,\beta}\) lies in $H^{1-\mu(\beta)}(L)$. Since $\mu(\beta)\ge 0$ as in Proposition \(\ref{UD_prop}\), it follows that necessarily $\mu(\beta)=0$ and hence \(\f_{0,\beta}\in H^1(L)\).

\begin{defn}
	We call $\f$ \textit{overconvergent} if for $\alpha\in H_1(L)$, the series
$\langle \alpha, P_\f\rangle =\sum_\beta T^{E(\beta)} Y^{\partial\beta} \langle \alpha,\f_{0,\beta}\rangle$
	define an element of the germ ring $\mathscr G$.
\end{defn}

Given an overconvergent $A_\infty$ homomorphism $\f$, the assignment
\[
Y^{\alpha} \mapsto Y^{\alpha} \exp \langle \alpha,  P_\f \rangle
\]
for $\alpha\in  H_1(L)$ naturally gives rise to an algebra endomorphism
\begin{equation}
\label{eq_phi_f}
\phi_\f :  \mathscr G \to\mathscr G . 
\end{equation}
Indeed, since $\langle\alpha, P_\f\rangle$ has supremum norm $<1$ on some neighbourhood of $Z$, the exponential series converges in the non‑archimedean sense.
We call $\phi_\f$ the transitioning homomorphism associated to $\f$.
In general, if $\f$ is just an arbitrary $A_\infty$ homomorphism, we do not know whether $\phi_\f$ admits an inverse.

\subsection{Superpotential functor}
We now relate the the action groupoid introduced in Section \(\ref{s_groupoid_germ}\) to the above setting of $A_\infty$ algebras by constructing the following groupoid and functor.

\begin{defn-thm}
	\label{defn_thm_UD_new}
Define a groupoid $\widehat{\UD}=\widehat{\UD}(L)$ as follows: An object is a properly unobstructed overconvergent $A_\infty$ algebra $(H^*(L),\m)$ in $\UD$ whose underlying graded vector space is fixed to be $H^*(L)$. A morphism is a ud-homotopy class $[\f]$ of an overconvergent $A_\infty$ homomorphism $\f$ in $\UD$ with $\f_{1,0}=\id$.

Moreover, the assignment
\[
W_\bullet :
\widehat{\UD}(L)
\longrightarrow
\mathscr G /\!/ \operatorname{Aut}_Z(\mathscr G),
\qquad
\m \longmapsto W_\m,
\]
which sends an object \(\m\) to its associated superpotential \(W_\m\), and the ud-homotopy class $[\f]$ of a morphism
$\f:\m\to \m'$
to the corresponding homomorphism \(\phi_\f\), defines a contravariant functor of groupoids. In particular, 
\[\phi_\f(W_{\m'})=W_\m.\]
We call it the \textit{superpotential functor} associated to $L$.
\end{defn-thm}

\begin{proof}
This is largely a concise package of the results obtained in \cite{Yuan_unobs} and \cite{Yuan_I_FamilyFloer}.
First, one needs to show that the resulting category is indeed a groupoid. The condition on \(\f_{1,0}\) implies that \(\f\) is an $A_\infty$ quasi-isomorphism. Hence, by the ud-version of the Whitehead theorem, Proposition \(\ref{prop_whitehead}\) (see also \cite[Proposition 2.17]{Yuan_unobs}), \(\f\) admits a homotopy inverse. Therefore every morphism in \(\widehat{\UD}(L)\) is an isomorphism.
One also needs to show that if \(\f\) and \(\g\) are overconvergent $A_\infty$ homomorphisms, then their composition \(\f\diamond \g\), whenever defined, is again overconvergent. Indeed, by \cite[Lemma 4.9]{Yuan_unobs}, we have
$P_{\f\diamond \g}=\f_{1,0}(P_\g)+\phi_\g(P_\f)= P_\g+\phi_\g(P_\f)$.
Thus the components of \(P_{\f\diamond \g}\) define elements of the germ ring $\mathscr G$, since the components of \(P_\f\) and \(P_\g\) do so, and since \(\phi_\g\) maps $\mathscr G$ to itself.

We next check that the assignment on morphisms is well-defined. Namely, if \(\f:\m\to \m'\) is ud-homotopic to \(\f'\), i.e. $\f\simud\f'$, then we need to show that \(\phi_\f=\phi_{\f'}\). By the canceling trick in \cite[Theorem 4.15 and Corollary 4.16]{Yuan_unobs}, the difference \(P_\f-P_{\f'}\) lies in the ideal generated by the obstruction series \(Q_{\m,i}\) associated to \(\m\). Since the objects are assumed to be properly unobstructed (see Definition \(\ref{defn_properly_unobs_overconvergent}\)), this ideal vanishes. Therefore \(\phi_\f=\phi_{\f'}\), and the assignment is well-defined.

The assignment is compatible with identities and composition. Since \(P_{\id}=0\), we have
$\phi_{\id}=\id$.
Moreover, by \cite[Corollary 4.10]{Yuan_unobs}, the assigned maps satisfy
$\phi_{\f\diamond \g}=\phi_{\g}\circ \phi_{\f}$.
Thus the assignment preserves identity morphisms and composition.
Let us also note that each \(\phi_\f\) is an automorphism. Indeed, let \(\g\) be a ud-homotopy inverse of \(\f\). Then
$\g\diamond \f\simud\id$ and $\f \diamond \g \simud\id$.
By the well-definedness and the compatibility with composition just proved, we obtain
$
\phi_\f\circ \phi_\g =
\phi_{\g\diamond \f} =
\phi_{\id} =
\id$
and similarly
$
\phi_\g\circ \phi_\f =
\phi_{\f\diamond \g} =
\phi_{\id} =
\id$.
Hence \(\phi_\f\) is invertible, with inverse \(\phi_\g\).

Finally, to see that the assignment indeed defines a functor to the action groupoid, we need to check that, for every morphism \(\f:\m\to \m'\), the automorphism \(\phi_\f\) sends the target superpotential to the source superpotential, that is,
$
\phi_\f(W_{\m'})=W_\m$.
This is given by the wall-crossing formula established in \cite[Theorem 4.12]{Yuan_unobs}, together with the properly unobstructedness assumption on the objects.
\end{proof}

We remark that, at this stage, the above discussion only concerns suitable abstract $A_\infty$ algebras.

\subsection{Geometric situations}

Let's connect the above framework with concrete symplectic geometry.
Given $J\in\mathfrak J(X)$, let $\check \m=\check \m^{J,\Xi}$ be the $A_\infty$ algebra on $\Omega^*(L)$ constructed in Theorem~\ref{axiom_J_thm} with an auxiliary choice $\Xi$. By Proposition~\ref{prop_minimal_model_homological_perturbation}, the homological perturbation construction with respect to the $g$-harmonic contraction produces a minimal model $A_\infty$ algebra $\m=\m^{g,J,\Xi}$ on $H^*(L)$. It belongs to $\UD$ by Proposition~\ref{prop_minimal_model_UD}. Moreover, the reverse isoperimetric inequality \cite{ReverseI} gives the estimate \eqref{eq_overconvergent_condition_E_ell}. It follows that $\m$ is overconvergent, and that the associated superpotential and obstruction series $W_\m,Q_{\m,j}$ lie in the germ ring $\mathscr G$; see also \cite[Proposition~4.4]{Yuan_unobs}.

\begin{lem}
\label{lem_W_m_not_zero}
In the above context, if \(W_\m \neq 0\) in \(\mathscr G\), then \(L\) bounds at least one \(J\)-holomorphic disk of Maslov index \(2\).
\end{lem}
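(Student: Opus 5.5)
We argue by contraposition: if $L$ bounds no nonconstant $J$-holomorphic disk of Maslov index $2$, then $W_\m=0$ in $\mathscr G$.

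First, by the definition \eqref{eq_W_m}, $W_\m$ vanishes as a formal series as soon as $\m_{0,\beta}=0$ for every $\beta$ with $\mu(\beta)=2$. Since $\mu(\beta)=2\neq 0$, every such $\beta$ is nonzero. By Assumption \ref{assumption_empty_moduli}, if no nonconstant $J$-holomorphic disk lies in such a class $\beta$, then $\check\m_{k,\beta}=0$ for all $k$. The statement therefore reduces to showing that the vanishing of $\check\m_{\bullet,\beta}$ on $\Omega^*(L)$ for all Maslov two classes forces the vanishing of $\m_{0,\beta}$ for the minimal model.

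This reduction is the one step that needs care. We unwind the tree formulas \eqref{eq_induction_tree_minimal_model}. A term contributing to $\m_{0,\beta}$ has the form $\pi\circ\check\m_{\ell,\beta_0}\circ(\mi_{0,\beta_1}\otimes\cdots\otimes\mi_{0,\beta_\ell})$ with $\beta_0+\beta_1+\cdots+\beta_\ell=\beta$. Recursively, it is a planar tree whose vertices carry operations $\check\m_{\ell_v,\beta_v}$, and $\sum_v\beta_v=\beta$. By (I-5) for $\check\m$, each contributing vertex has $\mu(\beta_v)\ge 0$. Maslov index is additive, so $\sum_v\mu(\beta_v)=2$ and all $\mu(\beta_v)$ are even. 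Hence exactly one vertex carries a class with $\mu(\beta_v)=2$, and all others have Maslov index $0$.

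We would like that vertex to carry the class $\beta$ itself, but the other vertices may carry nonzero Maslov-zero classes $\alpha$. Then the distinguished vertex carries $\beta-\sum\alpha$, which is a different Maslov two class. This causes no problem. The hypothesis is that $L$ bounds no $J$-holomorphic disk of Maslov index two in \emph{any} class. So Assumption \ref{assumption_empty_moduli} kills $\check\m_{\ell,\beta_v}$ for every class with $\mu(\beta_v)=2$. Every tree contributing to $\m_{0,\beta}$ with $\mu(\beta)=2$ therefore contains a vanishing vertex, and so $\m_{0,\beta}=0$.

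Summing over $\beta$ gives $W_\m=0$ as a formal series, hence also as an element of $\mathscr G$, which contradicts $W_\m\neq0$. If one prefers to phrase the conclusion through the germ ring, Proposition \ref{prop_res_Z_injective} identifies the germ with its series on $Z$, so formal vanishing and germ vanishing agree.

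Finally, the conclusion asks for a $J$-holomorphic disk, while the moduli spaces consist of stable maps. We interpret Assumption \ref{assumption_empty_moduli} as concerning genuine nonconstant disks in class $\beta$. A stable map in a Maslov two class has a disk component of Maslov index $\ge 0$ with the same boundary; if needed, we would upgrade this to an honest Maslov two disk by the usual decomposition argument. We expect no serious obstacle beyond the Maslov bookkeeping in the tree expansion.
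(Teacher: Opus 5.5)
Your argument is correct, but it takes a different route from the paper's. The paper argues directly rather than by contraposition. It introduces the semigroup $S$ generated by classes of nonconstant $J$-holomorphic disks. From Assumption~\ref{assumption_empty_moduli} it extracts only the support property $\check\m_{k,\beta}=0$ for $\beta\notin S$. It then proves by induction on energy, with a Gromov-compactness finiteness discussion, that $\mi_{0,\beta}$ and $\m_{0,\beta}$ vanish for $\beta\notin S$. Finally it writes a Maslov-two class with $\m_{0,\beta}\neq 0$ as a sum $\gamma_1+\cdots+\gamma_r$ of disk classes and uses $\mu(\gamma_j)\ge 0$ to find a Maslov-two summand. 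You instead do the Maslov bookkeeping inside the tree expansion \eqref{eq_induction_tree_minimal_model}. Condition (I-5) for $\check\m$ forces every vertex of a nonvanishing tree to have $\mu(\beta_v)\ge 0$, so exactly one vertex carries a Maslov-two class. That vertex is then killed by the Assumption, which you apply to all Maslov-two classes at once.

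Your version is shorter and needs only the algebraic semipositivity of the operations. You never need $\mu(\gamma)\ge 0$ for actual holomorphic disk classes. The paper's last step does need this, and attributes it somewhat loosely to Proposition~\ref{UD_prop}, which is a statement about nonvanishing operations rather than about disks.

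The paper's version gives a stronger structural statement: the supports of $\mi_{0,\bullet}$ and $\m_{0,\bullet}$ lie in $S$. It is also robust under a weaker form of the Assumption, where operations vanish only for classes not represented by any configuration of disks. Under that weaker form, your route would need exactly the disk-class semipositivity the paper uses; this is what your closing ``decomposition argument'' hedges toward. Under the Assumption as literally stated, that last paragraph of yours is unnecessary. Its phrase ``with the same boundary'' is also inaccurate for a component of a nodal configuration. Neither point affects the validity of your main argument.
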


\begin{proof}
Let \(\mathcal D_J\subset H_2(X,L)\) be the set of nonzero relative homology
classes represented by nonconstant \(J\)-holomorphic disks, i.e.,
the set of $\gamma\in H_2(X,L)\setminus\{0\}$ for which there is a nonconstant \(J\)-holomorphic disk $u:(D,\partial D)\to (X,L)$ with $[u]=\gamma$.
Define
\[
S=S_J
:=
\left\{
\gamma_1+\cdots+\gamma_\ell
\ \middle|\ 
\ell>0 ,\ \gamma_i\in \mathcal D_J
\right\}
\subset H_2(X,L).
\]
Namely, \(S\) is the additive subsemigroup of \(H_2(X,L)\) generated by
\(\mathcal D_J\), without adding the zero element $0\in H_2(X,L)$. Since every class in
\(\mathcal D_J\) has positive symplectic energy and the energy homomorphism is additive,
every element of \(S\) has positive energy.
Besides, there is a constant \(\hbar>0\) such that
	$E(\beta)\ge \hbar$
	for every generator of \(S\) and hence for every nonzero element of \(S\).
By Gromov compactness, for every fixed energy cutoff \(E_0>0\), there are only
	finitely many \(J\)-holomorphic disk classes \(\beta\) with \(E(\beta)<E_0\).
	It follows that the set of possible energy values occurring in \(S\) is discrete
	in \(\mathbb R_{>0}\), and \(S\) is countable. In particular, for any fixed
	\(\beta\in S\), there are only finitely many decompositions
	\[
	\beta=\beta_1+\cdots+\beta_\ell,
	\qquad \beta_i\in S.
	\]
	Indeed, each nonzero summand has energy at least \(\hbar\), so
	\(\ell\le E(\beta)/\hbar\), and Gromov compactness gives only finitely many possible summands below any fixed energy bound.
	
	By Assumption~\ref{assumption_empty_moduli}, the \(A_\infty\) algebra
	\(\check \m=\check \m^{J,\Xi}\) satisfies the following support property:
	if \(\beta\notin S\) and \(\beta\neq 0\), then
	\[
	\check \m_{k,\beta}=0
	\]
	for all \(k\). Let
	\[
	\mi : (H^*(L),\m) \longrightarrow (\Omega^*(L),\check \m)
	\]
	be the \(A_\infty\) quasi-isomorphism obtained from the \(g\)-harmonic
	contraction \((i,\pi,G)\) by homological perturbation, as in
	Proposition~\ref{prop_minimal_model_homological_perturbation}. We recall
	the gappedness condition around \eqref{CC_pi_eq}: if \(E(\beta)\le 0\) and
	\(\beta\neq 0\), then
	$	\mi_{k,\beta}=\check \m_{k,\beta}=\m_{k,\beta}=0$
	for all \(k\). We also recall that \(\mi_{0,0}=0\) by its construction.
	
	We first claim that
	\[
	\mi_{0,\beta}=0
	\qquad
	\text{for every } \beta\notin S.
	\]
It suffices to consider classes of positive energy.
	Suppose, to the contrary, that there exists \(\beta\notin S\) with
	\(\mi_{0,\beta}\neq 0\). Choose such a \(\beta\) with \(E(\beta)>0\) minimal. By the tree induction formula \eqref{eq_induction_tree_minimal_model}, every
	contribution to \(\mi_{0,\beta}\) is of the form
	$
	G \circ \check \m_{\ell,\alpha} \circ 
	(
	\mi_{0,\beta_1} \otimes \cdots \otimes \mi_{0,\beta_\ell}
	)$
	where
	$
	(\ell,\alpha)\neq (1,0)$ and $\alpha+\beta_1+\cdots+\beta_\ell=\beta$.
	Assume that such a term is nonzero. Then
	\(\check \m_{\ell,\alpha}\neq 0\), and hence, by the above support property of
	\(\check \m\), either \(\alpha=0\) or \(\alpha\in S\).
	
	If \(\ell=0\), then the term is \(G\circ \check \m_{0,\beta}\). As
	\(\beta\notin S\), the support property gives \(\check \m_{0,\beta}=0\),
	a contradiction.
	Now assume \(\ell >0 \). Since the term is nonzero, each factor
	\(\mi_{0,\beta_i}\) is nonzero. In particular, \(\beta_i\neq 0\) as
	\(\mi_{0,0}=0\). Also \(E(\beta_i)\ge 0\) by gappedness. If \(\alpha\in S\),
	then \(E(\alpha)>0\) and thus
	$E(\beta_i)<E(\beta)$ for each \(i\). By the minimality of \(E(\beta)\), each \(\beta_i\) must
	belong to \(S\). Hence
	$\beta=\alpha+\beta_1+\cdots+\beta_\ell\in S$,
	contradicting the choice of \(\beta\notin S\).
	It remains to consider the case \(\alpha=0\). Since
	\((\ell,\alpha)\neq (1,0)\), we have \(\ell\neq 1\), and hence
	\(\ell\ge 2\). As above, nonvanishing of the term implies
	\(\mi_{0,\beta_i}\neq 0\) for every \(i\), so each \(\beta_i\neq 0\).
	Thus each \(E(\beta_i)>0\), and since
	$\beta_1+\cdots+\beta_\ell=\beta$, we again have \(E(\beta_i)<E(\beta)\) for every \(i\). By the minimality of
	\(E(\beta)\), each \(\beta_i\in S\). Hence
	$\beta=\beta_1+\cdots+\beta_\ell\in S$,
	again a contradiction. This proves the claim.
	Moreover, the same argument, using the corresponding tree formula \eqref{eq_induction_tree_minimal_model} for the minimal model $A_\infty$ algebra, shows that
	\[
	\m_{0,\beta}=0
	\qquad
	\text{for every } \beta\notin S.
	\]
	
	Now suppose that \(W_\m\neq 0\) in \(\mathscr G\). By the definition of the
	superpotential \(W_\m\), this means that there exists a class
	\(\beta\in H_2(X,L)\) with Maslov index
	$\mu(\beta)=2$
	such that the corresponding coefficient of \(W_\m\), equivalently the
	relevant component of \(\m_{0,\beta}\), is nonzero. By the statement
	just proved, this class \(\beta\) must belong to \(S\).
Therefore \(\beta\) can be written as a finite sum
	$	\beta=\gamma_1+\cdots+\gamma_r$,
	where each \(\gamma_j\in S\) is represented by a nonconstant \(J\)-holomorphic disk.
	Since the Maslov index is additive, we have
	\[
	2=\mu(\beta)=\mu(\gamma_1)+\cdots+\mu(\gamma_r).
	\]
	Under the standing semipositivity assumption on Maslov indices as in Proposition \ref{UD_prop}, each \(\mu(\gamma_j) \ge 0\) is a
	nonnegative even integer. Hence at least one summand satisfies
	$\mu(\gamma_j)=2$.

The corresponding \(J\)-holomorphic disk has Maslov index \(2\). Thus \(L\)
	bounds a \(J\)-holomorphic disk of Maslov index \(2\), as claimed.
\end{proof}

\begin{lem}\label{lem_no_maslov_zero_implies_proper_unobs}
Assume that $L$ does not bound any nonconstant $J$-holomorphic disk of Maslov index 0 (this holds, in particular, when $L$ is monotone). Then the minimal model $A_\infty$ algebra \(\m=\m^{g,J,\Xi}\) is properly unobstructed in the sense of Definition~\ref{defn_properly_unobs_overconvergent}.
\end{lem}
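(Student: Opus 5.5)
We need $Q_\m=0$, that is, $\m_{0,\beta}=0$ for every $\beta$ with $\mu(\beta)=0$. The plan is to reuse the support argument from the proof of Lemma \ref{lem_W_m_not_zero}. We keep the semigroup $S=S_J$ generated by classes of nonconstant $J$-holomorphic disks, and the established facts that $\mi_{0,\beta}=0$ and $\m_{0,\beta}=0$ for every $\beta\notin S$. Here $\beta\neq 0$ may be assumed, since $\m_{0,0}=0$ by the gappedness convention in \eqref{CC_pi_eq}.

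\textbf{Step 1: reduce to classes in $S$.} Let $\mu(\beta)=0$ with $\m_{0,\beta}\neq 0$. By the support property above, $\beta\in S$, so we may write $\beta=\gamma_1+\cdots+\gamma_r$ with $r\ge 1$ and each $\gamma_j\in\mathcal D_J$.

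\textbf{Step 2: use the Maslov hypothesis.} Each $\gamma_j$ is represented by a nonconstant $J$-holomorphic disk. By the semipositivity assumption, which is valid for graded Lagrangians (the paper cites \cite[Lemma 3.1]{AuTDual} for this), we have $\mu(\gamma_j)\ge 0$. Additivity of the Maslov index gives $0=\mu(\beta)=\sum_j\mu(\gamma_j)$, so every $\gamma_j$ has $\mu(\gamma_j)=0$. This contradicts the hypothesis that $L$ bounds no nonconstant Maslov index $0$ $J$-disk. Hence $\m_{0,\beta}=0$ whenever $\mu(\beta)=0$, and therefore $Q_{\m,i}=0$ for all $i$, which is the statement.

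\textbf{Monotone case.} If $L$ is monotone, then $E(\beta)=\lambda\mu(\beta)$ with $\lambda>0$. A nonconstant disk has $E>0$, hence $\mu>0$, so the Maslov hypothesis is automatic.

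\textbf{Main obstacle.} The one point needing care is whether the semipositivity $\mu(\gamma)\ge 0$ is available for \emph{genuine} disk classes $\gamma\in\mathcal D_J$, rather than only for classes supporting nonzero operations. Proposition \ref{UD_prop} (I-5) only gives the latter. If this becomes an issue, I would avoid decomposing into disk classes altogether. Instead I would define $S'$ as the semigroup generated by classes of nonconstant $J$-disks with $\mu\neq 0$, or even $\mu>0$ under the graded assumption. I would then redo the minimal-energy induction from Lemma \ref{lem_W_m_not_zero}, tracking only classes $\alpha$ with $\check\m_{\ell,\alpha}\neq 0$; by (I-5) these all satisfy $\mu(\alpha)\ge 0$, and by Assumption \ref{assumption_empty_moduli} any such nonzero $\alpha$ is represented by a stable disk, none of whose components is a Maslov $0$ disk. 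A contribution to $\m_{0,\beta}$ with $\mu(\beta)=0$ would then have to be built entirely from pieces of Maslov index $0$, since all pieces have $\mu\ge 0$ and they sum to $0$. Iterating down in energy, this forces some nonzero class with $\check\m_{\ell,\alpha}\neq 0$ and $\mu(\alpha)=0$, which the hypothesis rules out.
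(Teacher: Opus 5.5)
Your argument is correct and is essentially the paper's route: the paper omits this proof as ``essentially identical'' to Lemma \ref{lem_W_m_not_zero}. That means using the support property $\m_{0,\beta}=0$ for $\beta\notin S$, decomposing $\beta$ into disk classes, and applying additivity and semipositivity of $\mu$, which the paper also applies to genuine disk classes there. Your fallback is sound as well, and slightly more economical: unwinding the tree formula needs only (I-5) for $\check\m$ plus Assumption \ref{assumption_empty_moduli}, which turns any nonzero $\check\m_{\ell,\alpha}$ with $\alpha\neq 0$, $\mu(\alpha)=0$ directly into a forbidden Maslov~$0$ disk.
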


The proof is essentially identical to that of Lemma~\ref{lem_W_m_not_zero}, and we omit it.
Recall that the obstruction series $Q_{\m,j}$ are determined by the terms $\m_{0,\beta}$ with Maslov index $\mu(\beta)=0$. Thus, in practice, the absence of Maslov-index-zero disks gives a simple sufficient criterion for proper unobstructedness.

In summary, in the above context, \((\HL,\m)\) defines an object of the groupoid \(\widehat{\UD}(L)\) introduced in Definition-Theorem~\ref{defn_thm_UD_new}.

Next, we aim to study the effect of making different choices.

Let $\pmb J=\{J_t\}_{t\in[0,1]}$ be a smooth path of $\omega$-tame almost complex structures with $J_0=J$ and $J_1=J'$.
Let \(\check \m'=\check \m^{J',\Xi'}\) be an $A_\infty$ algebra on $\OL$ from Theorem \ref{axiom_J_thm} for some auxiliary choice $\Xi'$.
Using Proposition \ref{prop_minimal_model_homological_perturbation}, let \(\m'=\m^{g',J',\Xi'}\) be the minimal model $A_\infty$ algebra obtained by using the $g'$-harmonic contraction for some metric $g'$.
By Theorem \ref{axiom_J_thm} (b), there exists a pseudo-isotopy \(\check \M:=\check \M^{\pmb J,\pmb \Xi}\) on $\Omega^*(L)_{[0,1]}$ whose restrictions at $t=0$ and $t=1$ are \(\check \m\) and \(\check \m'\) respectively.
Next, choose a smooth path $\pmb g=\{g_t\}_{t\in[0,1]}$ of Riemannian metrics connecting $g_0=g$ to $g_1=g'$. 
Applying the second part of Proposition \ref{prop_minimal_model_UD}, we obtain a pseudo-isotopy $\M:= \M^{\pmb g, \pmb J, \pmb \Xi}$ on $H^*(L)_{\oi}$ whose restrictions at $t=0$ and $t=1$ are \(\m\) and \(\m'\) respectively.
Furthermore, applying Proposition \ref{from_pseudo_to_A_homo_prop} to $\M$ yields an $A_\infty$ quasi-isomorphism 
\[ \f: (H^*(L),\m) \to (H^*(L), \m')
\]
with $\f_{1,0}=\id$.
By \cite[Theorem~1.5]{Yuan_I_FamilyFloer}, proper unobstructedness is preserved under $A_\infty$ quasi-isomorphisms in $\UD$; thus, if $\m$ is properly unobstructed, then so is the $A_\infty$ algebra \(\m'\).

In particular, under the above assumption, \(\f\) is a morphism in the groupoid \(\widehat{\UD}(L)\) of Definition-Theorem~\ref{defn_thm_UD_new}. Therefore,
$\phi_\f(W_{\m'})=W_\m$.
Since \(\phi_\f\) is an automorphism of $\mathscr G$, we conclude that:

\begin{lem}
\label{lem_W_neq_0_diff_choice}
$W_\m\neq 0$ in $\mathscr G$ if and only if $W_{\m'}\neq 0$ in $\mathscr G$.
\end{lem}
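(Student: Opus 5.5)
The plan is to deduce the equivalence directly from the functorial relation $\phi_\f(W_{\m'})=W_\m$ established just above, using only that $\phi_\f$ is a bijective algebra endomorphism of $\mathscr G$.

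\emph{Step 1 (setup).} I first record the hypotheses in force. Here $\m$ is properly unobstructed, for instance by Lemma~\ref{lem_no_maslov_zero_implies_proper_unobs}. By \cite[Theorem~1.5]{Yuan_I_FamilyFloer}, $\m'$ is then also properly unobstructed. Both algebras are overconvergent by the reverse isoperimetric inequality. The homomorphism $\f$ obtained from Proposition~\ref{from_pseudo_to_A_homo_prop} satisfies $\f_{1,0}=\id$, and we need it to be overconvergent. I would justify this with the same energy estimate \eqref{eq_overconvergent_condition_E_ell}: the terms $\f_{0,\beta}$ come from the pseudo-isotopy $\M$, whose operations are supported on classes carrying $J_t$-holomorphic disks for some $t$. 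A uniform reverse isoperimetric inequality along the compact family $\pmb J$ then makes each $\langle\alpha,P_\f\rangle$ a germ. With this in hand, $[\f]$ is a morphism in $\widehat{\UD}(L)$.

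\emph{Step 2 (transport of nonvanishing).} Definition-Theorem~\ref{defn_thm_UD_new} gives $\phi_\f(W_{\m'})=W_\m$, and $\phi_\f$ is an algebra automorphism of $\mathscr G$ with inverse $\phi_\g$, where $\g$ is a ud-homotopy inverse of $\f$ from Proposition~\ref{prop_whitehead}. The two directions then go as follows.
\begin{itemize}
\item If $W_{\m'}=0$, then $W_\m=\phi_\f(0)=0$, since $\phi_\f$ is additive.
\item If $W_\m=0$, then $W_{\m'}=\phi_\g(W_\m)=0$. Alternatively, this follows from injectivity of $\phi_\f$.
\end{itemize}
Taking contrapositives gives the stated equivalence.

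\emph{Main obstacle.} The only nonformal point is Step 1, namely that $\f$ lands in $\widehat{\UD}(L)$: both the overconvergence of $\f$ and the fact that $\g$ also lies in $\widehat{\UD}(L)$, which we need so that $\phi_\g$ is defined on $\mathscr G$.

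To handle $\g$, I would avoid constructing it abstractly. Instead I would build it geometrically from the reversed path $\pmb J^{-1},\pmb g^{-1}$, so that it inherits the same energy bounds as $\f$. Uniqueness of ud-homotopy inverses in Proposition~\ref{prop_whitehead}, combined with the well-definedness of $\phi$ on ud-homotopy classes, then identifies this geometric $\g$ with the inverse required above.
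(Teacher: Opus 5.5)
Your argument is correct and matches the paper's: the paper notes that $\f$ is a morphism in $\widehat{\UD}(L)$, so $\phi_\f(W_{\m'})=W_\m$ with $\phi_\f$ an automorphism of $\mathscr G$, and nonvanishing transfers in both directions. Your Step~1 and your treatment of $\g$ supply details the paper leaves implicit, namely overconvergence of $\f$ and of its inverse; for the reversed-path $\g$, though, you still need to show that it \emph{is} a ud-homotopy inverse of $\f$ (for instance via the flow property of \eqref{du_C_eq} under reversal of $\M$), since uniqueness of inverses alone does not establish that.
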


Consequently, it is valid to introduce the following definition.

\begin{defn}
	We say that $L$ is \textit{effective} if, for some choice of $g$, $J$, and $\Xi$, the associated minimal model $A_\infty$ algebra
	$\m=\m^{g,J,\Xi}$ on $\HL$ as constructed above
	is properly unobstructed and satisfies $W_{\m}\neq 0$
	in $\mathscr G$. By the preceding discussion, this condition is independent of the choice of $g$, $J$, and $\Xi$.
\end{defn}

Now, we are ready to prove our main result:

\begin{proof}[Proof of Theorem \ref{main_thm}]
	Let $\mathbb S\subset [0,1]$ be the set of parameters $s$ for which $L_s$ is effective. By condition, $0\in\mathbb S$.
	We claim that $\mathbb S$ is both open and closed in $[0,1]$. In fact, we will prove that $\mathbb S$ is locally constant in the following sense: for every $s\in[0,1]$, there exists an interval $I\subset[0,1]$ containing $s$ such that $s\in\mathbb S$ if and only if $s'\in\mathbb S$ for every $s'\in I$.

Fix $s\in[0,1]$, and choose a Weinstein neighborhood $\mathcal U$ of $L_s$. Identify $\mathcal U$ with a neighborhood of the zero section in the cotangent bundle $T^*L_s$, under which $L_s$ is identified with the zero section.
After shrinking $\mathcal U$ if necessary, we may find an interval $I$ containing $s$ so that $L_{s'}\subset \mathcal U$ for every $s'\in I$.
Besides, each $L_{s'}$ is the graph of a small closed one-form $\alpha_{s'}\in \Omega^1(L_s)$, where the family $I\ni s'\mapsto \alpha_{s'}$ is smooth and $\alpha_s=0$. In other words,
$L_{s'}=\Gamma_{\alpha_{s'}}=\{(q,\alpha_{s'}(q))\in T^*L_s:q\in L_s\}$.

Choose a smooth cutoff function
$\rho:T^*L_s \to [0,1]$
whose support is contained in $\mathcal U$, and which is identically $1$ on a slightly smaller neighborhood containing the graphs $\Gamma_{\alpha_{s'}}$ for all $s'\in I$. Define
$F_{s,s'}(q,p)
=
\bigl(q, p+\rho(q,p)\alpha_{s'}(q)\bigr)$
on $\mathcal U\subset T^*L_s$, where $q\in L_s$ and $p\in T_q^*L_s$; meanwhile, we set
$F_{s,s'}(x)=x$
for $x\in X\setminus \operatorname{supp}(\rho)$.
Shrinking $I$ further if necessary, we may assume that the one-forms $\alpha_{s'}$ are uniformly $C^1$-small. Hence the maps $F_{s,s'}$ are $C^1$-close to the identity. In particular, each $F_{s,s'}$ is a diffeomorphism of $X$. Moreover, because $\rho\equiv 1$ near the zero section, we have
$
F_{s,s'}(q,0)
=
(q,\alpha_{s'}(q))$
and therefore
$F_{s,s'}(L_s)=L_{s'}$.

The assignment
\[
F:I\times X\longrightarrow X,\qquad
F(s',x)=F_{s,s'}(x),
\]
is smooth, since $s'\mapsto\alpha_{s'}$ is a smooth family of one-forms and the cutoff function $\rho$ is a fixed smooth function. Since $\alpha_s=0$, we have
$F_{s,s}=\operatorname{id}_X$.
Since $F_{s,s'}$ is $C^1$-close to the identity, the pushforward almost complex structure
\[(F_{s,s'})_*J_s
=
dF_{s,s'}\circ J_s\circ dF_{s,s'}^{-1}
\]
is $C^0$-close to $J_s$. Since the space of $\omega$-tame almost complex structures is open in the $C^0$-topology, after shrinking $I$ if necessary, we may assume that $(F_{s,s'})_*J_s$ is $\omega$-tame for every $s'\in I$.
Since $F_{s,s'}$ is isotopic to the identity, it carries the grading of $L_s$ to the grading of $L_{s'}$ and preserves the Maslov index.

Given $s'\in I$, we use the notation $\beta'=(F_{s,s'})_*\beta$ for every $\beta\in H_2(X,L_s)$.
The diffeomorphism
$F_{s,s'}:(X,L_s) \to (X,L_{s'})$
identifies the corresponding moduli spaces of stable disks; cf. Section \ref{s_axiomatization}. Specifically, for every $\beta\in H_2(X,L_s)$, there is naturally an induced map
\[
\mathcal F_{s,s'}:
\mathcal M_{k+1,\beta}(J_s,L_s)
\longrightarrow
\mathcal M_{k+1,\beta'}
\bigl((F_{s,s'})_*J_s,L_{s'}\bigr)
\]
defined by
\[
\mathcal F_{s,s'}\bigl([\Sigma,u,\mathbf z]\bigr)
=
[\Sigma,F_{s,s'}\circ u,\mathbf z].
\]
Indeed, if
$u:(\Sigma,\partial\Sigma)\to (X,L_s)$
is $J_s$-holomorphic, then
\[
d(F_{s,s'}\circ u)\circ j_\Sigma
=
dF_{s,s'}\circ du\circ j_\Sigma
=
dF_{s,s'}\circ J_s\circ du
=
\bigl((F_{s,s'})_*J_s\bigr)\circ d(F_{s,s'}\circ u).
\]
Thus
$F_{s,s'}\circ u:(\Sigma,\partial\Sigma)\to (X,L_{s'})$
is $(F_{s,s'})_*J_s$-holomorphic and represents the class
$(F_{s,s'})_*\beta\in H_2(X,L_{s'})$. The stability condition is also preserved as the domain is unchanged and $F_{s,s'}$ is a diffeomorphism. The inverse map is induced by $F_{s,s'}^{-1}$ and $\mathcal F_{s,s'}$ is a bijection.
Then, $\mathcal F_{s,s'}$ is a homeomorphism with respect to the stable-map topology and the moduli space system $\mathbb M(J_s,L_s)$ (see \eqref{moduli_space_system_eq}) is componentwise homeomorphic to
$\mathbb M\bigl((F_{s,s'})_*J_s,L_{s'}\bigr)$.

In the context of Theorem \ref{axiom_J_thm}, let $\check{\m}^{J_s,\Xi}$ be an $A_\infty$ algebra on $\Omega^*(L_s)$ obtained from a choice $\Xi\in \mathbb V(J_s;L_s)$ of virtual fundamental chain associated with the moduli space system $\mathbb M(J_s,L_s)$. The above identification of moduli space systems transports this choice to a corresponding choice $\Xi' \in
\mathbb V\bigl((F_{s,s'})_*J_s;L_{s'}\bigr)$
associated with the moduli system $\mathbb M\bigl((F_{s,s'})_*J_s,L_{s'}\bigr)$.
The diffeomorphism $F_{s,s'}$ identifies the $A_\infty$ algebra $\check \m:=\check \m^{J_s,\Xi}$ constructed from $\mathbb M(J_s,L_s)$ with the one $\check \m':=\check \m^{J_{s'}, \Xi'}$ constructed from $\mathbb M\bigl((F_{s,s'})_*J_s,L_{s'}\bigr)$
in the following sense:
Writing
$F^*:=F_{s,s'}^*:\Omega^*(L_{s'}) \to \Omega^*(L_s)$,
for every $\beta\in H_2(X,L_s)$, $\beta'=(F_{s,s'})_*\beta$, and
$y_1,\dots,y_k\in \Omega^*(L_{s'})$, one has
\[
F^*\Bigl(
\check{\m}'_{k,\beta'}(y_1,\dots,y_k)
\Bigr)
=
\check{\m}_{k,\beta}
\bigl(F^*y_1,\dots,F^*y_k\bigr).
\]
This is known as the Fukaya trick; see \cite[Lemma 13.4]{FuCyclic}.

Moreover, for the chosen metric $g$ on $L_s$, let $F_*g$ denote the induced metric on $L_{s'}$, then the corresponding harmonic contractions are related to each other via $F$; see \cite[Lemma 3.13]{Yuan_I_FamilyFloer}.
Accordingly, the minimal model $A_\infty$ algebras
$\m$ and $\m'$
associated respectively with \(\check{\m}\) and \(\check{\m}'\), using the $g$-harmonic contraction on $L_s$ and the $F_*g$-harmonic contraction on $L_{s'}$, satisfy the analogous identity on cohomology:
\[
F^*\Bigl(
\m'_{k,\beta'}(x_1,\dots,x_k)
\Bigr)
=
\m_{k,\beta}
\bigl(F^*x_1,\dots,F^*x_k\bigr),
\]
where $x_1,\dots,x_k\in H^*(L_{s'})$ and $F^*:H^*(L_{s'})\to H^*(L_s)$ is the induced map on cohomology.
This follows directly from the algebraic construction of the minimal model and the compatibility of harmonic contractions under pullback; see \cite[Lemmas 3.14]{Yuan_I_FamilyFloer}.
In particular, if $\beta\in H_2(X,L_s)$ and
$\beta'\in H_2(X,L_{s'})$ are corresponding classes under the identification induced by $F=F_{s,s'}$, then
$F^*\bigl(\m'_{0,\beta'}\bigr)=\m_{0,\beta}$.
If, moreover, $\mu(\beta)=\mu(\beta')=2$, then
$\m'_{0,\beta'}\in H^0(L_{s'})$ and $\m_{0,\beta}\in H^0(L_s)$
by degree reasons. Hence, after identifying $H^0(L_{s'})$ with $H^0(L_s)$
via $F^*$, we have $\m'_{0,\beta'}=\m_{0,\beta}$ viewed as numbers.

Clearly the map $F=F_{s,s'}$ induces an isomorphism $H_1(L_s)\cong H_1(L_{s'})$ and thus an isomorphism between $\Lambda[[ H_1(L_s)]]$ and $\Lambda[[H_1(L_{s'})]]$ defined by
$Y^\gamma \mapsto Y^{F_*\gamma}$.
Let $\{e_i\}$ be a basis of $H_1(L_s)$, and set $e_i':=F_*e_i$. If we write $Y_i:=Y^{e_i}$ and $Y_i':=Y^{e_i'}$,
then this isomorphism is equivalently given by $Y_i \mapsto Y_i'$.
Let $\{e_i^\vee\}$ be the dual basis of $H^1(L_s)$. 
For clarity, we write $[\alpha_{s,s'}]=\sum_i a_i e_i^\vee$.
Similarly, we write
$\partial\beta=\sum_i (\partial_i\beta)e_i$ and $\partial\beta'=\sum_i (\partial_i\beta')e_i'$.
As $\beta'=F_*\beta$, we have $\partial\beta'=F_*(\partial\beta)$ and hence $\partial_i\beta'=\partial_i\beta$ for every $i$.
Observe that
\[
\textstyle
E(\beta') = \int_{\beta'} \omega = \int_\beta F^*\,\omega
= \int_\beta \omega + \int_{\partial\beta} \alpha_{s'}
= E(\beta) + \langle[\alpha_{s'}],\partial\beta\rangle = E(\beta) + \sum_i a_i \ \partial_i\beta .
\]
Then, under the above identification, the superpotential series
\[
W_{\m'}
=
\sum_{\beta'} T^{E(\beta')}Y^{\partial\beta'}\m'_{0,\beta'} = \sum_{\beta'} T^{E(\beta')} \prod_i (Y_i')^{\partial_i\beta} \ \m'_{0,\beta'}
\]
of $\m'$ in $\Lambda[[H_1(L_{s'})]]$ is identified with the series
\begin{align*}
\sum_{\beta} T^{E(\beta)+\sum_i a_i  \partial_i\beta}  \, \prod_i Y_i^{\partial_i\beta}
\ \m_{0,\beta}  = 
\sum_{\beta} T^{E(\beta)}  \, \prod_i (T^{a_i}Y_i)^{\partial_i\beta}
\ \m_{0,\beta} 
\end{align*}
in $\Lambda[[H_1(L_s)]]$.
In other words, after identifying \(\Lambda[[H_1(L_{s'})]]\) with
\(\Lambda[[H_1(L_s)]]\) as above, the series \(W_{\m'}\) is identified with the pullback of the superpotential
$W_{\m}$ in $\Lambda[[H_1(L_s)]]$
under the coordinate change
$Y_i\mapsto T^{a_i}Y_i$.
In particular, \(W_{\m}\) is nonzero if and only if \(W_{\m'}\) is nonzero.
In view of Lemma \ref{lem_W_neq_0_diff_choice}, this implies that $s\in\mathbb S \Longleftrightarrow s'\in\mathbb S$ for every \(s'\in I\). Hence either \(I\subset\mathbb S\) or \(I\cap\mathbb S=\varnothing\) as desired.

Finally, using Lemma \ref{lem_W_m_not_zero} and \ref{lem_W_neq_0_diff_choice} completes the proof.
\end{proof}

\bibliographystyle{alpha}
\bibliography{mybib_exist}		
	
\end{document}